\DeclareFontFamily{U}{MnSymbolC}{}
\DeclareSymbolFont{MnSyC}{U}{MnSymbolC}{m}{n}
\DeclareFontShape{U}{MnSymbolC}{m}{n}{
    <-6>  MnSymbolC5
   <6-7>  MnSymbolC6
   <7-8>  MnSymbolC7
   <8-9>  MnSymbolC8
   <9-10> MnSymbolC9
  <10-12> MnSymbolC10
  <12->   MnSymbolC12}{}
\DeclareMathSymbol{\intprod}{\mathbin}{MnSyC}{'270}
\numberwithin{equation}{section}
\newtheorem{theorem}{Theorem}[section]
\newtheorem{remark}{Remark}[section]
\theoremstyle{definition}
\def\command@factory#1{%
\expandafter\def\csname b#1\endcsname{\mathbf{#1}}
\expandafter\def\csname fk#1\endcsname{\mathfrak{#1}}
\expandafter\def\csname bb#1\endcsname{\mathbb{#1}}
\expandafter\def\csname cl#1\endcsname{\mathcal{#1}}
\expandafter\def\csname bcl#1\endcsname{\mathbfcal{#1}}
}
\newcommand{\ob}[1]{\overline{#1}}
\newcommand{\wt}[1]{\widetilde{#1}}
\newcommand{\mc}[1]{\mathcal{#1}}
\newcommand{\mcal}[1]{\mc{#1}}
\newcommand{\scp}[2]{\left<#1\,,\,#2\right>}
\newcommand{\ad}{\operatorname{ad}}
\newcommand{\dd}{\mathrm{d}}
\def\p{{\partial}}
\def\e{{\epsilon}}
\def\p{\partial}
\pgfplotsset{compat=1.16}
\begin{document}
\title{Stochastic multisymplectic PDEs and their structure-preserving numerical methods}
\author{Ruiao Hu$^1$\footnote{Corresponding author. Email: ruiao.hu15@imperial.ac.uk}~~and Linyu Peng$^2$\footnote{Email: l.peng@mech.keio.ac.jp} \vspace{0.4cm}\\
1. Department of Mathematics, Imperial College London, \\ London SW7 2AZ, UK\\
2. Department of Mechanical Engineering, Keio University,\\ Yokohama 223-8522, Japan}
\date{\today}
\maketitle

\begin{abstract}
    We construct stochastic multisymplectic systems by considering a stochastic extension to the variational formulation of multisymplectic partial differential equations proposed in [Hydon, {\it Proc. R. Soc. A}, 461, 1627--1637, 2005]. The stochastic variational principle implies the existence of stochastic $1$-form and $2$-form conservation laws, as well as conservation laws arising from continuous variational symmetries via a stochastic Noether's theorem. These results are the stochastic analogues of those found in deterministic variational principles. Furthermore, we develop stochastic structure-preserving collocation methods for this class of stochastic multisymplectic systems. These integrators possess a discrete analogue of the stochastic $2$-form conservation law and, in the case of linear systems, also guarantee discrete momentum conservation. The effectiveness of the proposed methods is demonstrated through their application to stochastic nonlinear Schr\"odinger equations featuring either stochastic transport or stochastic dispersion.
\end{abstract}
Keywords: Stochastic variational principle; multisymplectic system; multisymplectic integrator; conservation law

\tableofcontents

\section{Introduction}
The inclusion of stochasticity into dynamical systems is natural in modelling the effects of unresolvable fluctuations or unknown physical processes. One important application is in weather/climate forecasting, where stochastic perturbations are introduced to increase the ensemble variance \cite{BTB2015,BMP1999}. However, when the physical dynamics systems possess inherent structures, and in turn, conservation laws, special types of structure-preserving stochasticity are required such that the underlying structures are preserved in the presence of noise.

For Hamiltonian dynamics in finite dimensions, canonical Hamilton's equations possess conservation laws that include the total energy (Hamiltonian), symplectic form and Liouville form \cite{Holm2011,MR1999}.
In addition to conservation laws, when the system possesses variational symmetries, reduction by symmetry can be performed to obtain Lie--Poisson equations that are equivalent to canonical Hamilton's equations \cite{HMR1998, MMOPR2007, MRW1984a}. The dynamics of the reduced system lies on coadjoint orbits and they typically have Casimir functions that are invariant under the motion.
One well-studied class of structure-preserving stochastic dynamics is the stochastic canonical Hamilton's equations, see, e.g., \cite{Bismut1982, BRO2009, LCO2008}, where the conservation of symplectic form and Liouville Theorem are preserved under the stochastic motion. However, the conservation of energy is lost.
In addition to the preservation of conservation laws, when the deterministic system and its stochastic perturbations possess variational symmetries, reduction by symmetry can be performed to obtain equivalent stochastic Lie--Poisson equations.
In the pioneering work of Holm \cite{Holm2015}, stochasticity of Hamiltonian type is derived for applications to ideal fluid dynamics which takes the form of an infinite-dimensional Lie--Poisson equation defined on $\mathfrak{X}^*$, the space of $1$-form densities. In this setting, the stochastic perturbations take the form of stochastic transport, exhibiting remarkable analytical properties.

The aim of this paper is twofold. The first is to provide an extension of the stochastic canonical Hamilton's equations to infinite dimensions by modifying the variational multisymplectic framework of \cite{Bridges1997, BHL2010, Hydon2005} to include stochasticity. By working in the stochastic multisymplectic framework, the resulting stochastic multisymplectic partial differential equations (PDEs), also referred to as Hamiltonian PDEs, inherently possess stochastic analogues of the deterministic conservation laws.
The second aim of this paper is to construct structure-preserving methods such that (at least some of) the stochastic conservation laws are preserved in numerical simulations. In the deterministic setting, these methods are known as multisymplectic methods first introduced in \cite{BR2001} which has seen extensions to the stochastic setting, see, e.g., \cite{CHJ2023, JWH2013}.


\paragraph{Main contents of the paper.} 
We summarise the main contributions and contents of this paper as follows.
\begin{itemize}
    \item In Section \ref{sec:stochastic multisymplectic cont}, after a brief review of deterministic multisymplectic systems and their associated conservation laws, we construct a general class of stochastic multisymplectic systems by considering appropriate stochastic variational principles in Section \ref{subsec:stoch multi}. These stochastic dynamics possess stochastic analogues of the deterministic multisymplectic $1$-form and $2$-form conservation laws, and they are sufficiently general to incorporate existing stochastic multisymplectic systems. To demonstrate the flexibility of the construction, we present the multisymplectic formulation of stochastic transport noise and stochastic nonlinear Schr\"odinger (NLS) equations in Sections \ref{sec:SALT} and \ref{subsec:NLS}, respectively.
    \item In Section \ref{sec:stochastic multisymplectic discrete}, we construct a general family of stochastic structure-preserving numerical schemes based on stochastic Runge--Kutta methods that preserve exactly, a discrete version of the continuous stochastic symplectic form conservation law. For quadratic Hamiltonians, we show that the methods constructed can additionally preserve exactly, a discrete version of the stochastic momentum conservation law. In Section \ref{subsec:NLS numerics}, we evaluate the behaviour of the numerical schemes applied to stochastic NLS equations obtained in Section \ref{subsec:NLS}.
    \item Section \ref{sec:conclusion} contains concluding remarks and a discussion on directions for future work.
\end{itemize}

\section{Stochastic multisymplectic systems} \label{sec:stochastic multisymplectic cont}
\subsection{Review of deterministic multisymplectic systems}
We brief review the formulations of multisymplectic systems and their associated conservation laws following \cite{BHL2010,CHH2007,Hydon2005}. A system of PDEs is said to be multisymplectic if 
it can be derived from a variational principle with the Lagrangian $L(x,[z])$ 
of the following form
\begin{align}
  L({x},[{z}]) = L^i_\alpha({x},{z}) z^\alpha_{,i} - H({x},{z})\,,\label{eq:det multi lag}
\end{align}
where ${x}=(x^0, x^1,\ldots,x^n)\in\mathbb{R}^{n+1}$ is the independent (base space) variable with the identification $x^0 = t$, ${z}=(z^1,z^2,\ldots,z^m)\in\mathbb{R}^m$ is the dependent variable, and $H({x},{z})$ is the Hamiltonian of the system. Furthermore, 
total derivative $D_i$ with respect to $x^i$ is also denoted by the subscript $i$ after a comma, $[{z}]$ denotes ${z}$ and finitely many of its derivatives, and the Einstein summation convention is assumed unless otherwise stated. Hence, the Lagrangian for a (deterministic) multisymplectic system is affine in the first derivatives of the dependent variables.



To derive the Euler--Lagrange equations associated with the Lagrangian $L(x,[z])$, we consider the corresponding  variational principle,
\begin{align*}
    0 = \delta \mathcal{S}[z] = \delta \int L({x},[{z}])\, \dd vol\,,
\end{align*}
where $\dd vol$ is the volume form on the base space locally defined by $\dd vol := \dd t \wedge \dd x^1\wedge  \cdots \wedge \dd x^n$ and the variations $\delta z$ are assumed to be arbitrary and vanishing at boundaries. This results the Euler--Lagrange equations
\begin{align}
   K^i_{\alpha\beta}({x},{z})z^\beta_{,i} - \frac{\p L^i_\alpha}{\p x^i} = \frac{\p H}{\p z^\alpha}\,, \quad \text{where} \quad K^i_{\alpha\beta} := \frac{\p L^i_\beta}{\partial z^\alpha} - \frac{\p L^i_\alpha}{\partial z^\beta}\,, \label{eq:det multi symplectic}
\end{align}
and $K^i_{\alpha\beta}$ are known as the multisymplectic structure matrices. By construction, $K^i_{\alpha\beta}$ are antisymmetric for all $i$ and we can construct closed $2$-forms $\kappa^i$ by 
\begin{align}
    \kappa^i = \frac{1}{2}K^i_{\alpha\beta}({x},{z})\,\dd z^\alpha\wedge \dd z^\beta,\quad i=0,1,\ldots,n\,,
\end{align}
which satisfy
\begin{align}
    \kappa^i_{,i} = 0\,.\label{eq:det structural conservation laws}
\end{align}
This is known as the structural conservation law \cite{Bridges1997n}. Via the Poincar\'e Lemma for the variational bicomplex (see, e.g., \cite{Anderson1992, BHL2010, PH2023}), we can obtain 
\begin{equation}\label{eq:cls}
    \left(L_\beta^{j}(z)z_{,i}^\beta\right)_{,j}-\left(L^{j}_\beta(z)z^\beta_{,j}-H\right)_{,i} = \frac{\partial H}{\partial x^{i}}-\frac{\partial L_\beta^{j}}{\partial x^{i}}z^\beta_{,j}\, , \quad i = 0,1,\ldots,n\, .
\end{equation}
If the Lagrangian does not depend on some particular components of the independent variables, then conservation laws can be obtained as the RHS for these components vanishes. 
This also generalises the conservation laws obtained in \cite{Hydon2005}; namely, when  the Lagrangian is independent from all $x^i$ (which is often true), the RHS of \eqref{eq:cls} always vanishes.
In this special case, the Euler--Lagrange equations \eqref{eq:det multi symplectic} become
\begin{align}
   K^i_{\alpha\beta}({z})z^\beta_{,i}  = \frac{\p H}{\p z^\alpha}\,,
\end{align}
and now the $1$-form $\dd L(x,[z]) = \dd (L^i_\alpha({z})z^\alpha_{,i} - H({z}))$ satisfies the $1$-form quasi-conservation law
\begin{align}
     (L^i_\alpha({z}) \dd z^\alpha)_{,i} = \dd \left(L^i_\alpha({z})z^\alpha_{i} - H({z})\right) = \dd  L(x,[z])\,.
     \label{eq:det one form conservations laws}
\end{align}
Specially, the $j=0$ case in equation \eqref{eq:cls} gives the energy conservation law and the other cases give momentum conservation laws \cite{BR2001}.
In fact, these conservation laws belong to a broader class that results from Noether's theorem, associated with continuous variational symmetries of the system. As it was shown in, e.g., \cite{Hydon2005, Olver1993}, each variational symmetry of the  multisymplectic system \eqref{eq:det multi symplectic} corresponds to a (prolonged) vector field of the form
\begin{align}\label{eq:symgen}
    X = Q^\alpha(x, [z]) \frac{\p}{\p z^\alpha} + Q^\alpha_{,i}(x,[z])\frac{\p}{\p z^\alpha_{,i}}\, , 
\end{align}
which is commonly referred to as the evolutionary representative of a generalised symmetry, with the $m$-tuple $Q$ being its characteristic.
We say that $X$ generates (divergence) variational symmetries of the Lagrangian $L(x,[z])$ if 
\begin{align}
    X L(x,[z]) = i_X \dd L(x,[z]) = B^i_{,i}
    \label{eq:X def}
\end{align}
holds for some $(n+1)$-tuple $(B^0,B^1,\ldots, B^n)$ with $B^i = B^i(x, [z])$, for $i = 0,1,\ldots, n$. Noether's theorem implies that the $X$ yields the conservation law
\begin{align}
    \left(L^i_\alpha Q^\alpha - B^i\right)_{,i} = 0\,,\label{eq:det noether cls}
\end{align}
which can be seen by combining equations \eqref{eq:det one form conservations laws} and \eqref{eq:X def}. For translational symmetry in the independent variable $x^i$, the vector field of the symmetry is given by $X_i := z^\beta_{,i}\p_{z^\beta} + z^\beta_{,ij}\p_{z^\beta_{,j}}$ and $X_i L = L_{,i}$. Then, Noether's theorem yields the conservation law
\begin{align}
    \left(L^j_\beta z^\beta_{,i} - \left(L^{k}_\beta z^\beta_{,k}-H\right)\delta^j_i\right)_{,j} = 0\, ,
\end{align}
which is equivalent to equation \eqref{eq:cls} with vanishing RHS.

\subsection{Stochastic multisymplectic systems}\label{subsec:stoch multi}
In the presence of stochasticity, the time component of the base manifold becomes special as the time derivative has to be considered in an integral form. To keep the notation simple, we keep the notation $x^i$ to mean the $i$-th base coordinate for $i = 1,2,\ldots, n$ and $x^0 = t$ to mean the time coordinate. The volume form of the base space is given by $\dd vol^s = \dd x^1\wedge \dd x^2\wedge\ldots \wedge \dd x^n$ which excludes the time component. We retain the notation $[{z}]$ to instead denote ${z}$ and finitely many of its derivatives with respect to $x^i$ for $i = 1,2,\ldots, n$, excluding $x^0 = t$. In the Einstein summation notation for the base space variables, the summations exclude the time coordinates such that the summation range is between $1$ and $n$ inclusive. 

Let $(\Omega,\mathcal{F},\bbP)$ denote a probability space supporting a one-dimensional Brownian motion $W$. Let $\bbF$ denote the filtration generated by $W$ and we assume that all $m$ components of the dependent variable ${z} = (z^1,z^2,\ldots,z^m)$ are $\bbF$-adapted. For extension to higher-dimensional Brownian motion, we refer to Remark \ref{rmk:multi BM}.

We consider a stochastic action in the form of 
\begin{align}
\mathcal{S}[z]= \int \int_{t_0}^{t_1} \left[L^0_\alpha({z}) d z^\alpha + L^i_\alpha({z}) z^\alpha_{,i}\,dt + \wt{L}^i_\alpha({z}) z^\alpha_{,i}\circ dW_t  - H({z}) \,dt - \wt{H}({z}) \circ dW_t\,\right] \dd vol^s\,, \label{eq:stoch ms action}
\end{align}
where the notation $dz^\alpha$ is the stochastic time increment of $z^\alpha$ and the notation $\circ dW_t$ denotes the stochastic integrals in the Stratonovich sense such that the standard rules of calculus, e.g., product rule and chain rule, applies. The action \eqref{eq:stoch ms action} here is reminiscent of the stochastic phase space variational principle in the ordinary differential equation case which yields stochastic canonical Hamilton's equations (e.g., \cite{Bismut1982, LCO2008}). 
Taking variations, we get
\begin{align*}
    \begin{split}
        0 = \delta \mathcal{S}[z] &= \int\int_{t_0}^{t_1} \Bigg[ \frac{\p L^0_\beta}{\p z^\alpha}d z^\beta \delta z^\alpha + L^0_\beta d \delta z^\beta + \frac{\p L^i_\beta}{\p z^\alpha}z^\beta_{,i}\delta z^\alpha\,dt + L^i_\beta \delta z^\beta_{,i}\,dt  \\
        & \qquad \qquad \qquad  + \frac{\p \wt{L}^i_\beta}{\p z^\alpha}z^\beta_{,i}\delta z^\alpha\circ dW_t + \wt{L}^i_\beta\delta z^\beta_{,i}\circ dW_t - \frac{\p H}{\p z^\beta}\delta z^\beta\,dt - \frac{\p \wt{H}}{\p z^\beta}\delta z^\beta\circ dW_t\, \Bigg]\dd vol^s\\
        &= \int \int_{t_0}^{t_1} \Bigg[\frac{\p L^0_\beta}{\p z^\alpha}d z^\beta \delta z^\alpha - \frac{\p L^0_\beta}{\p z^\alpha} d z^\alpha \delta z^\beta + \frac{\p L^i_\beta}{\p z^\alpha}z^\beta_{,i}\delta z^\alpha\,dt - \frac{\p L^i_\beta}{\p z^\alpha} z^\alpha_{,i} \delta z^\beta\,dt+ \frac{\p \wt{L}^i_\beta}{\p z^\alpha}z^\beta_{,i}\delta z^\alpha\circ dW_t \\
        & \qquad \qquad \qquad   - \frac{\p \wt{L}^i_\beta}{\p z^\alpha} z^\alpha_{, i}\delta z^\beta\circ dW_t - \frac{\p H}{\p z^\beta}\delta z^\beta\,dt - \frac{\p \wt{H}}{\p z^\beta}\delta z^\beta\circ dW_t \,\Bigg] \dd vol^s +\text{BTs}\\
        &=\int \int_{t_0}^{t_1} \Bigg[ \left(\frac{\p L^0_\beta}{\p z^\alpha} - \frac{\p L^0_\alpha}{\p z^\beta} \right) d z^\beta + \left(\frac{\p L^i_\beta}{\p z^\alpha} - \frac{\p L^i_\alpha}{\p z^\beta}\right)z^\beta_{,i}\,dt + \left(\frac{\p \wt{L}^i_\beta}{\p z^\alpha} - \frac{\p \wt{L}^i_\alpha}{\p z^\beta}\right)z^\beta_{,i}\circ dW_t \\
        & \hspace{6em} - \frac{\p H}{\p z^\alpha}\,dt - \frac{\p \wt{H}}{\p z^\alpha}\circ dW_t \,\Bigg]\delta z^\alpha\, \dd vol^s + \textrm{BTs} \,,
    \end{split}
\end{align*}
where $\textrm{BTs}$ denotes the boundary terms. Choosing appropriate boundary conditions such that $\textrm{BTs}$ vanish, we obtain the stochastic multisymplectic Hamilton's equations from stochastic fundamental lemma of calculus of variations \cite{ST2023},
\begin{align}
     \left(\frac{\p L^0_\beta}{\p z^\alpha} - \frac{\p L^0_\alpha}{\p z^\beta} \right)d z^\beta + \left[\left(\frac{\p L^i_\beta}{\p z^\alpha} - \frac{\p L^i_\alpha}{\p z^\beta}\right)z^\beta_{,i} - \frac{\p H}{\p z^\alpha}\right]\,dt + \left[\left(\frac{\p \wt{L}^i_\beta}{\p z^\alpha} - \frac{\p \wt{L}^i_\alpha}{\p z^\beta}\right)z^\beta_{,i} - \frac{\p \wt{H}}{\p z^\alpha}\right]\circ dW_t  = 0 \,.
\end{align}
This can be arranged into a form that is reminiscent of the multisymplectic PDEs introduced in \cite{Bridges1997}, 
\begin{align}
    M_{\alpha\beta}d z^\beta + K^i_{\alpha\beta}z^\beta_{,i}\,dt + \wt{K}^i_{\alpha\beta}z^\beta_{,i}\circ dW_t = \frac{\p H}{\p z^\alpha}\,dt + \frac{\p \wt{H}}{\p z^\alpha}\circ dW_t\,,\label{eq:stoch multi symplectic}
\end{align}
where the skew-symmetric matrices $K^i_{\alpha\beta}$, $\wt{K}^i_{\alpha\beta}$, and $M_{\alpha\beta}$ are defined by
\begin{align}\label{eq:mkk}
      \quad M_{\alpha\beta} = \frac{\p L^0_\beta}{\p z^\alpha} - \frac{\p L^0_\alpha}{\p z^\beta}\, , \quad K^i_{\alpha\beta} = \frac{\p L^i_\beta}{\p z^\alpha} - \frac{\p L^i_\alpha}{\p z^\beta}\,, \quad \text{and} \quad  \wt{K}^i_{\alpha\beta} = \frac{\p \wt{L}^i_\beta}{\p z^\alpha} - \frac{\p \wt{L}^i_\alpha}{\p z^\beta}\,,
\end{align}
for $i=1,2,\ldots,n$, and  $\alpha,\beta=1,2,\ldots,m$.
We remark on some previous work on stochastic multisymplectic PDEs, notably starting in \cite{JWH2013} and subsequent publications, e.g., \cite{CHJ2023}. In the cases previously considered, stochasticity only appeared in the term of $\wt{H}$ as a perturbation to the Hamiltonian. One of the contributions in this work is the introduction of stochasticity multisymplectic matrices $\wt{K}^i_{\alpha\beta}$ for each of the base space variables which will play a role in the $1$-form quasi-conservation laws and the $2$-form conservation laws.

\begin{remark}[Constrained stochastic degrees of freedom]
    We remark that in practice, the skew-symmetric matrix $M_{ij}$ typically contains null rows. Suppose that the $\alpha$-th row $M_{\alpha\beta} = 0$ for $\alpha \in I$ where $I \subset [0,1,\ldots,m]$. Then, by the Doob--Meyer semimartingale decomposition theorem \cite{meyer1963decomposition}, we have
    \begin{align*}
        K^i_{\alpha\beta}z^\beta_{,i}\, dt = \frac{\p H}{\p z^\alpha}\,dt \,,\quad \text{and}\quad \wt{K}^i_{\alpha\beta}z^\beta_{,i} \circ dW_t = \frac{\p \wt{H}}{\p z^\alpha} \circ dW_t\,, \quad \text{for}\quad \alpha \in I\,.
    \end{align*}
    These relations are constraints between the spatial symplectic structures and the variational derivatives of the Hamiltonians. In the $\circ dW$ component, these equations enforces constraints between the stochastic degrees of freedom, $\wt{K}^i_{\alpha\beta}$ and $\wt{H}$. 
\end{remark}

Following the deterministic case, we consider the $1$-forms $L^0_\alpha \dd z^\alpha$, $L^i_\alpha \dd z^\alpha$ and $\wt{L}^i_\alpha \dd z^\alpha$. Through direct calculation, we have a stochastic version of $1$-form quasi-conservation law
\begin{align}
    d(L^0_\alpha \dd z^\alpha) + (L^i_\alpha \dd z^\alpha \,dt + \wt{L}^i_\alpha \dd z^\alpha \circ dW_t)_{, i} = \dd(L^0_\alpha d z^\alpha + L^i_\alpha z^\alpha_{,i} \,dt + \wt{L}^i_\alpha z^\alpha_{,i} \circ dW_t - H\,dt - \wt{H}\circ dW_t)\,, \label{eq:stochastic 1-form conservation law}
\end{align}
which extends the deterministic case \eqref{eq:det one form conservations laws}.
Taking the exterior derivative of \eqref{eq:stochastic 1-form conservation law} gives the structural conservation law
\begin{align}
    d \dd (L^0_\alpha \dd z^\alpha) + \dd(L^i_\alpha \dd z^\alpha \,dt + \wt{L}^i_\alpha \dd z^\alpha \circ dW_t)_{, i} = 0\,.
\end{align}
By introducing the exact $2$-forms 
\begin{equation}
\begin{aligned}
    \omega &= \dd (L^0_\alpha \dd z^\alpha) = \frac{1}{2}M_{\alpha\beta}\dd z^\alpha\wedge \dd z^\beta\,  , \\
     \kappa^i &= \dd (L^i_\alpha \dd z^\alpha)= \frac{1}{2}K^i_{\alpha\beta}\dd z^\alpha\wedge \dd z^\beta\,  , \\ \wt{\kappa}^i &= \dd(\wt{L}^i_\alpha \dd z^\alpha)= \frac{1}{2}\wt{K}_{\alpha\beta}^{i}\dd z^\alpha\wedge \dd z^\beta\,, 
\end{aligned}
\end{equation}
the structural conservation law can be written in the form
\begin{align}
    d \omega + \kappa^i_{,i}\,dt + \wt{\kappa}^i_{, i}\circ dW_t = 0\,. \label{eq:stoch structural conservation laws}
\end{align}
This is the stochastic generalisation of equation \eqref{eq:det structural conservation laws}.
Physical conservation laws can be derived by projecting the $1$-form quasi-conservation laws \eqref{eq:stochastic 1-form conservation law} onto the base space variables. For the spatial components, this is done by replacing each $1$-from $\dd z^{\alpha}$ by $z^{\alpha}_{,i} \dd x^i$ and evaluating at each $i$. Then, we have
\begin{align}
\begin{split}
    d (L^0_\alpha z^\alpha_{,j}) + (L^i_\alpha z^\alpha_{,j}\,dt + \wt{L}^i_\alpha z^\alpha_{,j}\circ dW_t)_{,i} = (L^0_\alpha d z^\alpha + L^i_\alpha z^\alpha_{,i}\,dt + \wt{L}^i_\alpha z^\alpha_{,i}\circ dW_t - H\,dt - \wt{H} \circ dW_t)_{, j} \,,\label{eq:stoch conservations laws}
\end{split}
\end{align}
which can be rewritten as
\begin{equation}
      d (L^0_\alpha z^\alpha_{,j})- (L^0_\alpha d z^\alpha)_{,j}+ \left((L^i_\alpha z^\alpha_{,j})_{,i}- (L^i_\alpha z^\alpha_{,i} -H )_{,j} \right)dt + \left( (\wt{L}^i_\alpha z^\alpha_{,j})_{,i} - (   \wt{L}^i_\alpha z^\alpha_{,i} - \wt{H} )_{, j}\right) \circ dW_t=0 \, .\label{eq:stoch 1-form cls pullback} 
\end{equation}

\begin{remark}[Loss of energy conservation]
    We note that energy conservation no longer holds in the presence of stochasticity. Additionally, we cannot pullback the $1$-form quasi-conservation law \eqref{eq:stochastic 1-form conservation law} to the $t$ component of the base space since the quantity $z_{,0} = \p_t z$ is not well defined in the stochastic case.
\end{remark}

To consider continuous variational symmetries and Noether's conservation laws in the stochastic setting, we modify the condition on when a (prolonged) vector field generates a variational symmetry of the stochastic multisymplectic system \eqref{eq:stoch multi symplectic}. We rewrite the action \eqref{eq:stoch ms action} to its finite variation and martingale part such that
\begin{align}\label{eq:modvar}
    \mathcal{S}[z] = \int \int_{t_0}^{t_1} \left[L\,dt + \wt{L}\circ dW_t \right]\dd vol^s\,,
\end{align}
where 
\begin{align}
\begin{split}
    &L := L^0_\alpha({z}) \eta^\alpha + L^i_\alpha({z}) z^\alpha_{,i} - H({z}) \,, \quad \wt{L} := L^0_\alpha({z}) \wt{\eta}^\alpha + \wt{L}^i_\alpha({z}) z^\alpha_{,i} - \wt{H}({z})\,,
\end{split}
\end{align}
and
\begin{equation}
d z^\alpha = \eta^\alpha\,dt + \wt{\eta}^\alpha\circ dW_t\,.
\end{equation}
Here, we have implicitly assumed a semimartingale decomposition for the dependent variable $z$ to have $\eta=\eta(x,t)$ and $\wt{\eta}=\wt{\eta}(x,t)$ as functions of $(x,t)$ for a given section $z(x,t)$. Using the preceding notations, the stochastic $1$-form quasi-conservation law \eqref{eq:stochastic 1-form conservation law} can be simply expressed as
\begin{align}
    d(L^0_\alpha \dd z^\alpha) + (L^i_\alpha \dd z^\alpha \,dt + \wt{L}^i_\alpha \dd z^\alpha \circ dW_t)_{, i} = \dd(L\,dt + \wt{L}\circ dW_t)\,. \label{eq:stoch 1-form cls L wtL}
\end{align}
We remark that the semimartingale decomposition of $z$ should be interpreted as a definition rather than a constraint. In the subsequent computations, $dz^\alpha$ will be replaced by  $\eta^\alpha \,dt + \wt{\eta} \circ dW_t$ before considering actions of vector fields.

As $\eta$ and $\wt{\eta}$ replace the role of $``\p_t z"$ (which is undefined in the stochastic setting) in the construction of jet prolonged vector fields that generate variational symmetries,  a new form of the symmetry generator is required, different from that of \eqref{eq:symgen}. In this case, consider the evolutionary generator
\begin{equation}
\begin{aligned}
    X & = Q^\alpha(x, t, [z],[\eta],[\wt{\eta}]) \frac{\p}{\p z^\alpha} + Q^\alpha_{,i}(x, t, [z],[\eta],[\wt{\eta}])\frac{\p}{\p z^\alpha_{,i}} \\
    &\qquad \quad + P^\alpha(x,t,[z],[\eta],[\wt{\eta}])\frac{\p}{\p \eta^\alpha} + \wt{P}^\alpha(x,t,[z],[\eta],[\wt{\eta}])\frac{\p}{\p \wt{\eta}^\alpha}\,,
\end{aligned}
\end{equation}
where the relation of $Q$, $P$ and $\wt{P}$ is yet  to be defined. We say that $X$ generates a (divergence) variational symmetry of the stochastic action \eqref{eq:modvar} if 
\begin{align}
    X{L} = i_X \dd {L} = B^i_{,i} \quad \textrm{and}\quad X \wt{L} = i_X \dd \wt{L} = \wt{B}^i_{,i} \label{eq:X def stoch}
\end{align}
hold for some $n$-tuples $B^i = B^i(x,t, [z],[\eta],[\wt{\eta}])$ and $\wt{B}^i = \wt{B}^i(x,t,[z],[\eta],[\wt{\eta}])$ for $i = 1,2,\ldots, n$. Utilising the semimartingale decomposition of $L^0_\alpha$,
\begin{align*}
    d L^0_\alpha = F_\alpha^0\,dt + \wt{F}^0_\alpha\circ dW_t\,,
\end{align*}
for some $F^0_\alpha$ and $\wt{F}^0_\alpha$, the stochastic $1$-form quasi-conservation law \eqref{eq:stoch 1-form cls L wtL} can be expressed as
\begin{align}
    \begin{split}
        \left(F^0_\alpha\dd z^\alpha + L^0_\alpha \dd \eta^\alpha + (L^i_\alpha \dd z^\alpha)_{, i}\right)\,dt + 
        \left(\wt{F}^0_\alpha\dd z^\alpha + L^0_\alpha \dd \wt{\eta}^\alpha + (\wt{L}^i_\alpha \dd z^\alpha)_{, i}\right)\circ dW_t =\dd L\,dt + \dd \wt{L}\circ dW_t\,.
    \end{split}
\end{align}
Taking the interior product of $X$ and using the symmetry definition \eqref{eq:X def stoch}, we obtain the following relation
\begin{align}
    \left(F^0_\alpha Q^\alpha + L^0_\alpha P^\alpha + (L^i_\alpha Q^\alpha)_{, i} \right)\,dt + \left(\wt{F}^0_\alpha Q^\alpha + L^0_\alpha \wt{P}^\alpha + (\wt{L}^i_\alpha Q^\alpha)_{, i} \right)\circ dW_t = B^i_{,i}\,dt + \wt{B}^i_{,i} \circ dW_t \,. \label{eq:stoch cls noether broken}
\end{align}
When the following stochastic differential relation between $Q$, $P$ and $\wt{P}$ holds,
\begin{align}
    d Q^\alpha = P^\alpha \,dt + \wt{P}^\alpha \circ dW_t\,, \label{eq:compatbility cond}
\end{align}
equation \eqref{eq:stoch cls noether broken} is equivalent to the conservation law
\begin{align}
    d(L^0_\alpha Q^\alpha) + \left(L^i_\alpha Q^\alpha\,dt + \wt{L}^i_\alpha Q^\alpha\circ dW_t - B^i\,dt - \wt{B}^i\circ dW_t\right)_{,i} = 0\,,\label{eq:stoch cls noether}
\end{align}
which is the stochastic generalisation of the deterministic conservation law \eqref{eq:det noether cls}. 
This establishes a stochastic version of Noether's theorem, connecting variational symmetries with conservation laws of stochastic multisymplectic PDEs.

\begin{remark}
    The relation \eqref{eq:compatbility cond} can be interpreted as the condition that
    \begin{align*}
        d(i_X \dd z^\alpha) = i_X \dd\! \left( dz^\alpha\right)\,,
    \end{align*}
    which is analogous to the following relation in the deterministic case:
    \begin{equation*}
      D_i(i_X  \omega) = i_X  (D_i \omega)\,,
    \end{equation*}
    where $\omega$ is a differential form over the prolonged jet space, $X$ is an evolutionary vector field, and $D_i$ denotes the total derivative of the $i$-th independent variable. An interpretation of the deterministic case within the framework of the variational bicomplex is available in, e.g., \cite{Anderson1992, BHL2010}.
\end{remark}

As the stochastic multisymplectic systems \eqref{eq:stoch multi symplectic} and the corresponding   action \eqref{eq:stoch ms action} are both translational invariant in the spatial base variables,  we consider the evolutionary generator of translational symmetry  
\begin{align*}
    X_j=z^\alpha_{,j} \frac{\p}{\p z^\alpha}+z^\alpha_{,ji}\frac{\p}{\p z^\alpha_{,i}} + \eta_{,j}^\alpha\frac{\p}{\p \eta^\alpha} + \wt{\eta}_{,j}^\alpha\frac{\p}{\p \wt{\eta}^\alpha}
\end{align*}
for each $j \in \{1,2,\ldots, n\}$. Then, we have $X_j L = L_{,j}$ and $X_j \wt{L} = \wt{L}_{,j}$, and the relation \eqref{eq:compatbility cond} is also satisfied. Consequently, we obtain the conservation law \eqref{eq:stoch 1-form cls pullback} from the general one \eqref{eq:stoch cls noether}, similar to the deterministic case.
\begin{remark}[Driving multidimensional Brownian motion]\label{rmk:multi BM}
    In the preceding exposition, we have considered stochastic multisymplectic equations derived from the stochastic action \eqref{eq:stoch ms action} that is driven by a $1$-dimensional Brownian motion $W$. The extension to stochastic multisymplectic equations with driving multidimensional Brownian motion can be derived in an analogous fashion by considering the following action
    \begin{align*}
        \mathcal{S}[z] = \int \int_{t_0}^{t_1} \left[L^0_\alpha({z}) d z^\alpha + L^i_\alpha({z}) z^\alpha_{,i}\,dt + \wt{L}^{i}_{\alpha\beta}({z}) z^\alpha_{,i}\circ dW^{\beta}_t  - H({z}) \,dt - \wt{H}_\beta({z}) \circ dW^\beta_t\,\right] \dd vol^s\,.
    \end{align*}
    Here, $W^\beta_t$ are the components of the $N$-dimensional Brownian motion ${W} = (W^1,W^2,\ldots,W^N)$, where again we assume that the dependent variable ${z} = (z^1,z^2,\ldots,z^m)$ are adapted to the filtration generated by ${W}$. Hamilton's principle thus yields the stochastic multisymplectic system
    \begin{align*}
        M_{\alpha\beta}d z^\beta + K^i_{\alpha\beta}z^\beta_{,i}\,dt + \wt{K}^{i}_{\alpha\beta\gamma}z^\beta_{,i}\circ dW^\gamma_t = \frac{\p H}{\p z^\alpha}\,dt + \frac{\p \wt{H}_\gamma}{\p z^\alpha}\circ dW^\gamma_t\,,
    \end{align*}
    where $$\wt{K}^{i}_{\alpha\beta\gamma} := \frac{\partial \wt{L}^{i}_{\beta\gamma}}{\partial z^\alpha} - \frac{\partial \wt{L}^{i}_{\alpha\gamma}}{\partial z^\beta}\, ,\quad i=1,2,\ldots,n\, , \quad \alpha,\beta=1,2,\ldots,m\, ,\quad \gamma =1,2,\ldots,N\, ,$$ 
    and $M_{\alpha\beta}, K^i_{\alpha\beta}$ have the same definition as \eqref{eq:mkk}.
\end{remark}

\subsection{Stochastic advection by Lie transport}\label{sec:SALT}
We consider the example of EPDiff \cite{CHH2007, HM2005, HMR1998, HT2018a} to demonstrate the family of equations that exhibits the stochastic multisymplectic structure \eqref{eq:stoch multi symplectic}. In the deterministic case, EPDiff equation on an $n$-dimensional Riemannian manifold $\mcal{D}$ equipped with a metric $g$ can be derived from the following Clebsch variational principle
\begin{align}
    \mathcal{S} = \int_{t_0}^{t_1} \ell[u]\,dt + \int_{t_0}^{t_1}\int_\mcal{D} \pi_k\left(\p_t l_k + u_j\p_jl_{k}\right) \dd vol^s\,dt\,, \label{eq:det clebsch vp}
\end{align}
where $\ell[u]$ is an action defined on the space of vector fields $\mathfrak{X}(\mcal{D})$ (as well as its prolongations) and all variations are assumed to be arbitrary and vanishing at boundaries. The dependent variables in \eqref{eq:det clebsch vp} have the following interpretations that $u$ is the continuum Eulerian velocity, $l_k$ is the $k$-th fluid label and $\pi_k$ is the $k$-th  Lagrange multiplier enforcing the advection of $l_k$ by $u$. 
The critical point of the action \eqref{eq:det clebsch vp} gives the following relationships
\begin{align*}
    \delta u:\, \pi_k \nabla l_k = \frac{\delta \ell}{\delta u}\,,\quad \delta \pi_k:\, \p_t l_k + u_j\p_jl_{k} = 0\,,\quad \text{and}\quad  \delta l_k:\, \p_t \pi_k + \p_j(u_j\pi_k) = 0\,.
\end{align*}
Via a direct calculation \cite{CHH2007,Holm2011}, one obtains the Euler--Poincar\'e equation
\begin{align}
    \p_t \frac{\delta \ell}{\delta u} + \ad^*_{u} \frac{\delta \ell}{\delta u} = 0\,, \label{eq:det EP eq}
\end{align}
where the coadjoint operator $\ad^*: \mathfrak{X}(\mcal{D}) \times \mathfrak{X}^*(\mcal{D}) \rightarrow \mathfrak{X}^*(\mcal{D})$ is defined by 
\begin{align*}
    \scp{\ad^*_u m}{v} = \scp{m}{\ad_u v} = \scp{m}{-[u, v]}\,,\quad \forall\, u, v\in \mathfrak{X}(\mcal{D})\,, m \in \mathfrak{X}^*(\mcal{D})\,,
\end{align*}
with $\scp{\cdot}{\cdot}:\mathfrak{X}(\mcal{D})\times \mathfrak{X}^*(\mcal{D}) \rightarrow \mathbb{R}$  the (weak) duality pairing induced by the metric $g$, and $\delta$ denotes the variational derivative.  In the following, the metric $g$ is assumed to be Euclidean, so it is not necessary to distinguish between components with raised and lowered indices. 
In local coordinates, for $u = u_i\nabla_i \in \mathfrak{X}(\mcal{D})$ and $m = m_i \dd x^i\otimes \dd vol^s \in \mathfrak{X}^*(\mcal{D})$, the coadjoint operator has the coordinate expression 
\begin{align*}
    \left(\ad^*_u m\right)_i = u_j \nabla_j m_i + m_j \nabla_i u_j + m_i \nabla_j u_j\,.
\end{align*}
We consider a concrete choice of Lagrangian $\ell$, 
\begin{align}
    \ell[u] = \frac{1}{2}|| u ||^2_{H^1_\lambda} = \int_\mcal{D} \frac{1}{2}\left(|u|^2 + \lambda^2|\nabla u|^2\right)\,\dd vol^s\,,
\end{align}
where $H^1_\lambda$ is the $\lambda$-weighted $H^1$ norm for some $\lambda \in \mathbb{R}^+$. The resulting Euler--Poincar\'e equation becomes the EPDiff$(H^1_\lambda)$ equation
\begin{align}
    \p_t m_i + u_j \nabla_j m_i + m_j \nabla_i u_j + m_i \nabla_j u_j = 0\,,\qquad m_i = \left(1-\lambda^2\nabla^2\right)u_i\,. \label{eq:det EPdiff eq}
\end{align}
When the space is one-dimensional, this is equivalent to the Camassa--Holm equation originally derived in \cite{CH1993}. To cast the EPDiff$(H^1_\lambda)$ equation \eqref{eq:det EPdiff eq} to the multisymplectic formalism, we first define the base space variables as
\begin{align*}
    x^{0} = t\, , \quad \text{and} \quad   x^i\,, \quad i = 1,2,\ldots, n\, ,
\end{align*}
where $x=(x^1,x^2,\ldots,x^n)$ is the local coordinate on $\mcal{D}$. The dependent variable ${z}$ is defined as
\begin{align}
    z^i = u_i\,,\quad z^{n+j} = l_j\,,\quad z^{2n+j} = \pi_j\,, \quad z^{(i+2)n+j} = w_{ij}\,, \quad i,j = 1,2,\ldots,n \,,
\end{align}
Consider the following action \cite{CHH2007}
\begin{align}
    \mathcal{S} = \int_{t_0}^{t_1}\int_{\mcal{D}} \left(\frac{1}{2}u_i u_i - \frac{\lambda^2}{2}w_{ij}w_{ij} + \lambda^2 w_{ij} u_{i,j} + \pi_k(\p_t l_k + u_j l_{k,j})\right) \,\dd vol^s \,dt  \,,
\end{align}
where the auxiliary variables $w_{ij} = u_{i,j}$ are introduced such that the Lagrangian is affine in the space and time derivatives of the dependent variables $u_i, w_{ij}, \pi_i$ and $l_i$. Comparing with the abstract form of the Lagrangian \eqref{eq:det multi lag}, we have the 
following representation of $L^i_\alpha$:
\begin{equation}\label{eq:Lij}
\begin{aligned}
&L^0_i = 0\, , \quad L_{n+j}^0= \pi_j\, ,\quad L_{2n+j}^0=0\, ,\quad L_{(i+2)n+j}^0=0\, ,\\
&L^k_i = \lambda^2 w_{ik}\, , \quad L_{n+j}^k= \pi_j u_k\, ,\quad L_{2n+j}^k=0\, ,\quad L_{(i+2)n+j}^k=0\, ,
\end{aligned}
\end{equation}
for $i,j,k=1,2,\ldots,n$,
and the Hamiltonian $H$ is given by
\begin{align}\label{eq:HHH}
    H(z) = -\frac{1}{2}\left(u_iu_i - \lambda^2 w_{ij} w_{ij}\right) = -\frac{1}{2}\left(|u|^2 - \lambda^2 |w|^2\right)\,.
\end{align}
Taking the variations of $\mathcal{S}$ gives the following relationships
\begin{align*}
    \begin{split}
        \delta u_i &: u_i - \lambda^2 w_{ij,j} + \pi_k l_{k,i} = 0\, ,\\
        \delta \pi_k &: \p_t l_k + u_j l_{k,j} = 0\, ,\\
        \delta l_k &: -\p_t \pi_k - (\pi_k u_j)_{,j} = 0\, ,\\
        \delta w_{ij} &: w_{ij} = u_{i,j}\,,
    \end{split}
\end{align*}
which can be assembled into the multisymplectic form below
\begin{align}
\begin{split}
    &\begin{pmatrix}
        0 & 0 &0 &0 \\
        0 &0 & -\p_t & 0\\
        0& \p_t & 0 & 0 \\
        0 & 0 & 0 & 0
    \end{pmatrix}
    \begin{pmatrix}
    u_i \\l_k \\ \pi_k\\ w_{ij}
    \end{pmatrix} +
    \begin{pmatrix}
        0 & \pi_k\partial_i & 0 & -\lambda^2 \partial_j \\
        -\pi_k\partial_i & 0 & -u_j\partial_j & 0\\
        0 & u_j\partial_j & 0 & 0\\
        \lambda^2\partial_j & 0 & 0 & 0
    \end{pmatrix}
    \begin{pmatrix}
    u_i \\l_k \\ \pi_k\\ w_{ij}
    \end{pmatrix}
    = \nabla_z H \,.
\end{split}
\end{align}
The $1$-form conservation law in the dependent variable basis \eqref{eq:det one form conservations laws} for the EPDiff$(H^1_\lambda)$ case can be calculated to be
\begin{align*}
    \p_t\left(\pi_k \dd l_k\right) + \p_j\left(\lambda^2w_{ij} \dd u_{i,j} + \pi_ku_j \dd l_k\right) = \dd L \,,
\end{align*}
and the structural conservation law corresponding to \eqref{eq:det structural conservation laws} is expressed as
\begin{align*}
    \p_t \left(\dd\pi_k \wedge \dd l_k\right) + \p_j\left(\lambda^2\dd w_{ij}\wedge \dd u_i + u_j\dd \pi_k\wedge \dd l_k + \pi_k\dd u_j \wedge \dd l_k\right) = 0\,.
\end{align*}

The stochastic perturbation under consideration is in the form of the stochastic advection by Lie transport (SALT) framework \cite{Holm2015}. For an unspecified Lagrangian, the SALT equations can be derived using a stochastic perturbation to the Clebsch variational principle \eqref{eq:det clebsch vp} for
\begin{align}
    \mathcal{S} = \int_{t_0}^{t_1} \ell[u]\,dt +  \int_\mcal{D} \int_{t_0}^{t_1} \pi_k\left(\dd l_k + \mcal{L}_u l_{k}\,dt + \mcal{L}_{\xi} l_{k} \circ dW_t \right)\, \dd vol^s \,, \label{eq:stoch clebsch vp}
\end{align}
where $\xi = \xi(x) \in \mathfrak{X}(\mcal{D})$ is a prescribed vector field with possible dependence of the domain. For simplicity, we consider the case where $\xi =(\xi^1,\xi^2,\ldots,\xi^n)\in \mathbb{R}^n$ are constants such that the stochastic action conforms to the case considered in \eqref{eq:stoch ms action}. The resulting stochastic Euler--Poincar\'e equation is give by
\begin{align}
     d \frac{\delta \ell}{\delta u} + \left(\ad^*_{u} \frac{\delta \ell}{\delta u}\right) \,dt + \left(\ad^*_{\xi} \frac{\delta \ell}{\delta u} \right) \circ dW_t = 0\,. \label{eq:stoch EP eq}
\end{align}
Analytically, the type of noise appearing in \eqref{eq:stoch EP eq} are known as transport noise. When $\xi$ is constant, the addition of noise corresponds to a translation of space
\begin{align}
    x \, \mapsto \wt{x}(x,t)\, := x - \xi W(t)\,, \quad \text{for} \quad W(t) = W_0 - \int_0^t dW_s\,. \label{eq:stoch x translation}
\end{align}
Thus, for a solution $u(x,t)$ to the deterministic equation \eqref{eq:det EP eq}, $u(\wt{x}(x,t), t)$ is a solution to the stochastic equation \eqref{eq:stoch EP eq}. In the case when $\ell$ corresponds to the Lagrangian for fluid equations such as Euler's fluid, analytical properties have been established in, e.g., \cite{CFH19, CHLN2022109632} for non-constant $\xi$.
Choosing the EPDiff$(H^1_\lambda)$ Lagrangian, we have the SALT EPDiff equation
\begin{align}
    d m_i + \left(u_j \nabla_j m_i + m_j \nabla_i u_j + m_i \nabla_j u_j\right)\,dt + \left(\xi^j \nabla_j m_i + m_j \nabla_i \xi^j + m_i \nabla_j \xi^j\right)\circ dW_t = 0\,, \label{eq:stoch EPdiff eq}
\end{align}

Noticing that the inclusion of noise only introduces an affine perturbation in the spatial derivative of the dependent variables in the action, \eqref{eq:stoch clebsch vp}, the stochastic multisymplectic variational principle is given by
\begin{align}
     \mathcal{S} = \int \left[\int_{t_0}^{t_1} \left(\frac{1}{2}u_i u_i - \frac{\lambda^2}{2}w_{ij}w_{ij} + \lambda^2 w_{ij}u_{i,j}\right)\,dt + \pi_k(d l_k + u_j l_{k,j}\,dt + \xi^j l_{k,j}\circ dW_t)\right]\dd vol^s\,.
\end{align}
Comparing with the abstract form of the stochastic multisymplectic action \eqref{eq:stoch ms action}, we have \eqref{eq:Lij} and
\begin{align*}
\wt{L}^k_i=0\, ,\quad \wt{L}^k_{n+j}=\pi_j\xi_k\, ,\quad \wt{L}^k_{2n+j} =0\, , \quad \wt{L}^k_{(i+2)n+j}=0\, ,\quad i,j,k=1,2,\ldots,n \, .
\end{align*}
Taking variations will give the stochastic Euler--Lagrange equations
\begin{align}
    \begin{split}
        \delta u_i &: u_i - \lambda^2 w_{ij,j} + \pi_k l_{k,i} = 0\,,\\
        \delta \pi_k &: d l_k + u_j l_{k,j}\,dt + \xi^j l_{k,j}\circ dW_t = 0\,,\\
        \delta l_k &: -d \pi_k - (\pi_k u_j)_{,j}\,dt - (\pi_k \xi^j)_{,j}\circ dW_t = 0\,,\\
        \delta w_{ij} &: w_{ij} = u_{i,j}\,,
    \end{split}
\end{align}
which can be assembled to the following multisymplectic form below (for constant $\xi$)
\begin{align}
\begin{split}
    \begin{pmatrix}
        0 & 0 &0 &0 \\0 &0 & -d & 0\\ 0& d &0&0\\0&0&0&0
    \end{pmatrix}
    \begin{pmatrix}
    u_i \\l_k \\ \pi_k\\ w_{ij}
    \end{pmatrix} &+
    \begin{pmatrix}
        0 & \pi_k\partial_i & 0 & -\lambda^2 \partial_j \\
        -\pi_k\partial_i & 0 & -u_j\partial_j & 0\\
        0 & u_j\partial_j & 0 & 0\\
        \lambda^2\partial_j & 0 & 0 & 0
    \end{pmatrix}
    \begin{pmatrix}
    u_i \\l_k \\ \pi_k\\ w_{ij}
    \end{pmatrix}\,dt \\
    &  +
    \begin{pmatrix}
        0 &0 &0 &0 \\
        0 &0 & -\xi^j\partial_j & 0\\
        0& \xi^j\partial_j &0&0\\
        0&0&0&0
    \end{pmatrix}
    \begin{pmatrix}
    u_i \\l_k \\ \pi_k\\ w_{ij}
    \end{pmatrix}\circ dW_t  
    = \nabla_z H\,dt\,,
\end{split}
\end{align}
where $H(z)$ is defined as in the deterministic case, Eq. \eqref{eq:HHH}. 
\begin{remark}
    In the particular case where $\wt{L}^k({z}) = \xi^k L^0({z})$ for constant $\xi^k \in \mathbb{R}$, the resulting stochastic multisymplectic equation will have the spatial translation property defined in equation \eqref{eq:stoch x translation}. 
\end{remark}

Comparing with the general form of \eqref{eq:stochastic 1-form conservation law}, we have for stochastic EPDiff$(H^1_{\lambda})$ the 1-form conservation law:
\begin{align}
\begin{split}
     d(\pi_k \dd l_k) &+ (\lambda^2 w_{ij} \dd u_i \,dt + \pi_k u_j \dd l_k \,dt + \pi_k \xi^j \dd l_k \circ dW_t)_{,j} \\
    &  = \dd\left(\left(\frac{1}{2}u_i u_i - \frac{\lambda^2}{2}w_{ij}w_{ij} + \lambda^2 w_{ij}u_{i,j}\right)\,dt + \pi_k(d l_k + u_j l_{k,j}\,dt + \xi^j l_{k,j}\circ dW_t)\right)\, ,
\end{split}\label{eq:stoch epdiff conservation law}
\end{align}
and the stochastic structural conservation law
\begin{align*}
   d\left(\dd\pi_k \wedge \dd l_k\right) + \p_j\left(\lambda^2\dd w_{ij}\wedge \dd u_i + u_j\dd \pi_k\wedge \dd l_k + \pi_k\dd u_j \wedge \dd l_k\right)\,dt + \p_j\left(\xi^j\dd \pi_k\wedge \dd l_k \right)\circ dW_t = 0\,.
\end{align*}
Additionally, one can show that the pullback to the spatial base variables of equation \eqref{eq:stoch epdiff conservation law} gives the stochastic EPDiff equation \eqref{eq:stoch EP eq}. 


\subsection{Stochastic nonlinear Schr\"odinger equations}\label{subsec:NLS}
One classical  example of multisymplectic PDEs is the NLS equation (e.g., \cite{Bridges1997n,chen2002symplectic}). On an $n$-dimensional Riemannian manifold $\mathcal{D}$ equipped with metric $g$ which is assumed to be the Euclidean metric for simplicty, the NLS equation can be written in complex wave function form as
\begin{align}
    i\p_t \psi = -\Delta \psi + 2\kappa|\psi|^2\psi\,.
\end{align}
Here, $\psi$ is the complex wave function and $\kappa \in \mathbb{R}$ is a real constant. To write the NLS equation in multisymplectic form, we write the NLS equation in the real and imaginary components of $\psi = p + iq$ to have the system of equations
\begin{equation}
    \begin{aligned}
    \p_t p &= -\Delta q + 2\kappa(p^2 + q^2)q\, ,\\
     \p_t q &= \Delta p - 2\kappa(p^2 + q^2)p\,. \label{eq:det NLS}
\end{aligned}
\end{equation}
Introducing the auxiliary variables $v_j$ and $w_j$ defined by $v_j = \p_j p$ and $w_j = \p_j q$, respectively, we can in turn define the vector of dependent variables ${z}$ as
\begin{align*}
    z^1 = p\,,\quad z^{2} = q\,,\quad z^{2 + j} = v_{j}\,, \quad z^{2+n+j} = w_{j}\,, \quad j = 1,2,\ldots,n \,,
\end{align*}
and consider the following action
\begin{align}
    \begin{split}
         \mathcal{S} = \int_{t_0}^{t_1}\int_{\mcal{D}}\left(p\p_t q +v_j\p_jp + w_j \p_j q - H(z)\right)\,\dd vol^s\,dt\,,
    \end{split}
\end{align}
where the Hamiltonian $H$ is given by
\begin{align}
    H({z}) = -\frac{1}{2}\left(\kappa(p^2 + q^2)^2 - |v|^2 - |w|^2\right)\,.
\end{align}
Compare with the abstract form of the Lagrangian \eqref{eq:det multi lag}, we have the following representation of $L^j_\alpha$:
\begin{align}
\begin{split}
    &L^0_1 = 0\,,\quad L^0_2 = p\,, \quad L^0_{2+j} = 0\,,\quad L^0_{2+n+j} = 0\,,\\
    &L^k_1 = v_k\,,\quad L^k_2 = w_k\,, \quad L^k_{2+j} = 0\,,\quad L^k_{2+n+j} = 0\,,
\end{split}
\end{align}
for $j,k = 1,2,\ldots, n$.
The NLS equations can be assembled into the multisymplectic form \eqref{eq:stoch multi symplectic} as follows
\begin{align}
    \begin{pmatrix}
        0 & \p_t & 0 & 0\\
        -\p_t & 0 & 0 & 0\\
        0 & 0 & 0 & 0\\
        0 & 0 & 0 & 0\\
    \end{pmatrix}
    \begin{pmatrix}
        p\\ q\\v_j \\ w_j
    \end{pmatrix}
    + 
    \begin{pmatrix}
        0 & 0& -\p_j & 0\\
        0 & 0 & 0 & -\p_j\\
        \p_j & 0 & 0 & 0\\
        0 & \p_j & 0 & 0\\
    \end{pmatrix}
    \begin{pmatrix}
        p\\ q\\v_j \\ w_j
    \end{pmatrix}
    = 
    \begin{pmatrix}
       -2\kappa(p^2+ q^2)p \\ -2\kappa(p^2+ q^2)q \\ v_j \\ w_j
    \end{pmatrix}
    = \nabla_{{z}} H({z}) \,.\label{eq:det ms NLS}
\end{align}

\begin{remark}[Notation on the expression of multisymplectic matrices]
To write down explicitly the multisymplectic matrices $M$ and $K^j$ in the form of equation \eqref{eq:stoch multi symplectic} for the NLS equations, we introduce the following notations. Define 
\begin{align*}
    e_j \in \mathbb{R}^{n} = (0,\ldots, 0, 1,0,\ldots, 0)\,,
\end{align*}
where the $1$ entry is in the $j$-th index. Additionally define $0_n \in \mathbb{R}^{1\times n}$ and $0_{n\times n} \in \mathbb{R}^{n\times n}$ as vectors and matrices of $0$s of shape $1\times n$ and $n \times n$, respectively. We can write $M$ and $K^j$ as 
\begin{align}
    M = 
    \begin{pmatrix}
        0 & 1  & 0_n & 0_n\\
        -1 & 0  & 0_n & 0_n\vspace{0.1cm}\\
        0_n^T & 0_n^T  & 0_{n\times n} & 0_{n\times n} \vspace{0.15cm}\\
        0_n^T & 0_n^T  & 0_{n\times n} & 0_{n\times n}
    \end{pmatrix}\quad 
    \text{and}\quad 
    K^j = 
    \begin{pmatrix}
        0 & 0 & -e_j & 0_n\\
        0 & 0  & 0_n & -e_j\vspace{0.1cm}\\
        e_j^T & 0_n^T  & 0_{n\times n} & 0_{n\times n}\vspace{0.15cm}\\
        0_n^T & e_j^T & 0_{n\times n} & 0_{n\times n}
    \end{pmatrix}\,.
\end{align}    
However, we believe that the implicit summation notation as used in equation \eqref{eq:det ms NLS} is more convenient and we shall persist with the notation for the rest of the section. 
\end{remark}
The $1$-form conservation law in the dependent variable basis \eqref{eq:det one form conservations laws} for the NLS equations can be found as
\begin{align*}
    \p_t\left(p \dd q\right) + \p_j\left(v_j\dd p + w_j\dd q\right) = \dd \left(p\p_t q +v_j\p_jp + w_j \p_j q + \frac{1}{2}\left(  \kappa\left(p^2 + q^2\right)^2- |v|^2 -|w|^2\right)\right) \,,
\end{align*}
and the structural conservation law corresponding to \eqref{eq:det structural conservation laws} is expressed as
\begin{align*}
    \p_t\left(\dd p \wedge \dd q\right) + \p_j\left(\dd v_j\wedge \dd p +\dd w_j\wedge \dd q\right) =  0\,.
\end{align*}
\paragraph{Stochastic transport.}
Following the stochastic EPDiff example presented in Section \ref{sec:SALT}, we additionally consider the equivalent transport noise applied to the NLS equations. In this case, we consider the stochastic perturbation defined by taking $\wt{L}^j_\alpha = \xi^j L^0_\alpha$ where $\{\xi^j\}_{j = 1,2,\ldots, n}$ are real coefficients without introducing the stochastic Hamiltonian $\wt{H}$. Thus, we have $\wt{K}^j = \xi^j M$ and we obtain the following stochastic NLS equations in the multisymplectic form
\begin{align}
    \begin{split}
            \begin{pmatrix}
        0 & d & 0 & 0\\
        -d & 0 & 0 & 0\\
        0 & 0 & 0 & 0\\
        0 & 0 & 0 & 0\\
    \end{pmatrix}
    \begin{pmatrix}
        p\\ q\\v_j \\ w_j
    \end{pmatrix}
   & + 
    \begin{pmatrix}
        0 & 0& -\p_j & 0\\
        0 & 0 & 0 & -\p_j\\
        \p_j & 0 & 0 & 0\\
        0 & \p_j & 0 & 0\\
    \end{pmatrix}
    \begin{pmatrix}
        p\\ q\\v_j \\ w_j
    \end{pmatrix}
    \,dt \\
    & + 
    \begin{pmatrix}
        0 & \xi^j\p_j & 0 & 0\\
        -\xi^j\p_j & 0 & 0 & 0\\
        0 & 0 & 0 & 0\\
        0 & 0 & 0 & 0\\
    \end{pmatrix}
    \begin{pmatrix}
        p\\ q\\v_j \\ w_j
    \end{pmatrix}
    \circ dW_t
    = 
    \begin{pmatrix}
       -2\kappa(p^2+ q^2)p \\ -2\kappa(p^2+ q^2)q \\ v_j \\ w_j
    \end{pmatrix}
    \,dt\,.
    \end{split}\label{eq:SALT multi NLS}
\end{align}
Expressing in terms of $p, q$ variables we have 
\begin{align}
    \begin{split}
        &dp + \xi^j\p_jp \circ dW_t = -\Delta q\,dt + 2\kappa(p^2 + q^2)q\,dt  \,,\\
        &dq + \xi^j\p_jq \circ dW_t = \Delta p\,dt - 2\kappa(p^2 + q^2)p\,dt \,. 
    \end{split}\label{eq:SALT NLS}
\end{align}
Since $\xi^j \in \mathbb{R}$ are constants, we again have the property that if $q(x,t), p(x,t)$ are solutions to the deterministic NLS equations \eqref{eq:det NLS}, then $q\left(x - \xi W(t), t\right), p\left(x - \xi W(t), t\right)$ are solutions to the stochastic NLS equations \eqref{eq:SALT NLS}, where $\xi = (\xi^1, \xi^2,\ldots,\xi^n)^T \in \mathbb{R}^n$. 

Substituting the expression of $\wt{L}^j_\alpha$ into the general form of the stochastic conservation laws, \eqref{eq:stochastic 1-form conservation law} and \eqref{eq:stoch structural conservation laws}, we have
\begin{align*}
d\left(p \dd q\right) &+ \p_j\left(\left(v_j\dd p + w_j\dd q\right)\,dt + \xi^j p\dd q\circ dW_t\right)\\ 
& = \dd \left(p dq + \left(v_j\p_jp + w_j \p_j q\right)\,dt + \xi^j p\p_j q \circ dW_t + \frac{1}{2}\left(  \kappa\left(p^2 + q^2\right)^2 -  |v|^2 - |w|^2\right)\,dt\right) \,,
\end{align*}
and
\begin{align*}
    d\left(\dd p \wedge \dd q\right) + \p_j\left(\left(\dd v_j\wedge \dd p +\dd w_j\wedge \dd q\right)dt + \xi^j \dd p \wedge \dd q \circ dW_t\right) =  0\,,
\end{align*}
respectively, associated to the stochastic NLS equations \eqref{eq:SALT NLS}.

\paragraph{Stochastic dispersion.}
Motivated from \cite{CHLZ2017}, let us consider the stochastic perturbation defined by taking $\wt{K}^j = \e K^j$ and stochastic Hamiltonian $\wt{H} = \frac{\e}{2}\left(|v|^2 + |w|^2\right)$. In this case, we have $\wt{L}^j_\alpha := \e L^j_\alpha$ and the stochastic multisymplectic NLS equations can be written as 
\begin{align}
\begin{split}
    &\begin{pmatrix}
        0 & d & 0 & 0\\
        -d & 0 & 0 & 0\\
        0 & 0 & 0 & 0\\
        0 & 0 & 0 & 0\\
    \end{pmatrix}
    \begin{pmatrix}
        p\\ q\\v_j \\ w_j
    \end{pmatrix}
    + 
    \begin{pmatrix}
        0 & 0& -\p_j & 0\\
        0 & 0 & 0 & -\p_j\\
        \p_j & 0 & 0 & 0\\
        0 & \p_j & 0 & 0\\
    \end{pmatrix}
    \begin{pmatrix}
        p\\ q\\v_j \\ w_j
    \end{pmatrix}
    \,dt \\
    &\qquad \qquad + 
    \begin{pmatrix}
        0 & 0& -\e\p_j & 0\\
        0 & 0 & 0 & -\e\p_j\\
        \e\p_j & 0 & 0 & 0\\
        0 & \e\p_j & 0 & 0\\
    \end{pmatrix}
    \begin{pmatrix}
        p\\ q\\v_j \\ w_j
    \end{pmatrix}
    \circ dW_t
    = 
    \begin{pmatrix}
       -2\kappa(p^2+ q^2)p \\ -2\kappa(p^2+ q^2)q \\ v_j \\ w_j
    \end{pmatrix}
    \,dt
    + \e
    \begin{pmatrix}
        0 \\ 0 \\ v_j \\ w_j
    \end{pmatrix}
    \circ dW_t \,.
\end{split}\label{eq:stoch multi NLS}
\end{align}
Expressing the stochastic NLS equations can be written in terms of $p, q$ variables only, we have
\begin{align}
    \begin{split}
        dp &= -\Delta q\,dt + 2\kappa(p^2 + q^2)q\,dt - \e\Delta q \circ dW_t \,,\\
        dq &= \Delta p\,dt - 2\kappa(p^2 + q^2)p\,dt + \e\Delta p\circ dW_t\,. 
    \end{split}\label{eq:stoch NLS}
\end{align}
\begin{remark}[Consistency of auxiliary variables $v$ and $w$]
    Recall that in the deterministic case, the auxiliary variables $v_j$ and $w_j$ are defined by $v_j = \p_j p$ and $w_j = \p_j q$. In the stochastic case, these definitions are again enforced in the third and fourth components of the multisymplectic equation \eqref{eq:stoch multi NLS} which read
    \begin{align*}
        \p_j p\,dt + \e\p_j p\circ dW_t &= v_j\,dt + \e v_j \circ dW_t\,,\\
        \p_j q\,dt + \e\p_j q\circ dW_t &= w_j\,dt + \e w_j \circ dW_t\,.
    \end{align*}
    That is, the conditions $v_j = \p_j p$ and $w_j = \p_j q$ are constrained by both the $dt$ and $dW_t$ components of the motion and the stochastic Hamiltonian $\wt{H}$ is essential in enforcing these constraints.
\end{remark}
The stochastic local conservation laws that are associated with the stochastic NLS equations \eqref{eq:stoch NLS} are the $1$-form quasi-conservation law corresponding to \eqref{eq:stochastic 1-form conservation law}, expressed as
\begin{align*}
    &d\left(p \dd q\right) + \p_j\left(v_j\dd p + w_j\dd q\right)\left(dt + \e\circ dW_t\right) \\
    &\qquad = \dd \left(p dq + \left(v_i\p_ip + w_i \p_i q\right)\left(dt + \e\circ dW_t\right)  - \frac{1}{2}\left(\left(|v|^2 + |w|^2\right)(dt + \e\circ dW_t) - \kappa\left(p^2 + q^2\right)^2\,dt\right) \right) \,,
\end{align*}
and the structural conservation law corresponding to \eqref{eq:stoch structural conservation laws}, expressed as
\begin{align*}
    d\left(\dd p \wedge \dd q\right) + \p_j\left(\dd v_j\wedge \dd p +\dd w_j\wedge \dd q\right)\left(dt + \e\circ dW_t\right) =  0\,.
\end{align*}
Both stochastic NLS equations, \eqref{eq:SALT NLS} and \eqref{eq:stoch NLS}, preserve the properties of global conservation of wave function density $|\psi|^2$ and linear momentum $J(\psi, \psi^*) = \psi^*\nabla\psi = p\nabla q - q\nabla p$ from the deterministic NLS equations. That is,
\begin{align*}
    d \int_{\mcal{D}} |\psi|^2\, \dd vol^s = 0\,,\quad \textrm{and} \quad d \int_{\mcal{D}} \psi^*\nabla\psi\, \dd vol^s = 0\,,
\end{align*} 
which can be verified through direct calculation.

\section{Stochastic multisymplectic methods}\label{sec:stochastic multisymplectic discrete}
The application of collocation methods to multisymplectic PDEs of the form \eqref{eq:det multi symplectic} was first introduced in \cite{Reich2000} and later generalised in \cite{HLS2005}. When applied to multisymplectic PDEs, collocation methods discretise the system through Runge--Kutta methods in both temporal and spatial dimensions. This will produce a discrete version of local conservation of multisymplectic form. Additionally, it was shown that through appropriate choices of the coefficients appearing in the Runge--Kutta methods, quadratic invariants can be preserved exactly. Thus, collocation methods can preserve energy and momentum exactly for linear multisymplectic PDEs.

In extension to the stochastic multisymplectic PDEs of the form \eqref{eq:stoch multi symplectic}, one can discretise system \eqref{eq:stoch multi symplectic} using a deterministic Runge--Kutta method with $s$ stages for the spacial direction and a stochastic Runge--Kutta method with $r$ stages for the temporal direction. In this section, we will adhere to the strict Einstein summation convention, where repeated superscripts and subscripts imply summation.

\subsection{Stochastic collocation methods}
For simplicity, we consider the stochastic collocation methods for the stochastic multisymplectic PDEs \eqref{eq:stoch multi symplectic}, where the multisympletic structure matrices $\mathbf{M}=(M_{\alpha\beta}), \mathbf{K}^i=(K^i_{\alpha\beta})$ and $ \wt{\mathbf{K}}^i=(\wt{K}^i_{\alpha\beta})$ are constant. Restricting to one spatial dimensions, the general form of the stochastic PDEs is given by
\begin{align}
    M_{\alpha\beta
    }d z^\beta + K_{\alpha\beta}\p_x z^\beta \,dt + \wt{K}_{\alpha\beta}\p_x z^\beta \circ dW_t = \frac{\p H}{\p z^\alpha}\,dt + \frac{\p \wt{H}}{\p z^\alpha} \circ dW_t\,, \label{eq:stochastic ms 1d}
\end{align}
where $\mathbf{M}$, $\mathbf{K}$ and $\wt{\mathbf{K}}$ are all constant matrices.

The collocation points in the $(t, x)$ domain are determined as in the standard setting. We consider a discretisation of the integration domain through equidistant points $  [x_0,x_1, \ldots,x_M] \times [t_0, t_1,\ldots,t_N]$ such that $x_{n+1} - x_{n} = \triangle x$  and $t_{k+1} - t_{k} = \triangle t$  for all $n \in \{0,1,\ldots, M-1\}$ and  $k \in \{0,1,\ldots, N-1\}$. Between every spatial and temporal  discretisation points, we consider $s$ and $r$ collocation points given by $x_n + c_m \triangle x$ and $t_k + d_i\triangle t $, respectively, where $m \in \{1,2,\ldots, s\}$ and $i \in \{1,2,\ldots, r\}$. We introduce the following notations as the evaluation of $z$ at the collocation points, 
\begin{align*}
    &z_{0, m} := z(t_k, x_n + c_m\triangle x)\,,\quad z_{i, 0} := z(t_k + d_i \triangle t, x_n)\,,\quad Z_{i,m} := z(t_k + d_i\triangle t, x_n + c_m\triangle x)\,,\\
    &z_{1, m} := z(t_{k+1}, x_n + c_m\triangle x)\,,\quad \textrm{and} \quad z_{i,1} := z(t_k + d_i\triangle t , x_{n+1})\,.
\end{align*}
For the approximation of integration against Brownian motion, we  use the first-order approximation \cite{BB2001, KP1992},
\begin{align}
    \triangle W = W(t_{k+1}) - W(t_k) \approx \int_{t_k}^{t_{k+1}} \circ dW_t \,,\label{eq:brownian increments}
\end{align}
which is the increment of the Wiener process over the time discretisation and is a Gaussian random variable with zero mean and variance $\triangle t$. As we are only using the first-order approximation, one can only expect local mean square convergence of order at most $1.0$ \cite{BB2001}. For higher-order methods, one needs to consider higher-order approximations such as $J = \int_{t_k}^{t_{k+1}}\int_{t_k}^s\circ dW_{s}\,ds_1$. 

Using above quantities, we follow \cite{ZJ2019} and consider the stochastic Runge--Kutta methods with $s$-stage in spatial direction and $r$-stage in temporal direction  defined by
\begin{align}
    \begin{split}
        &Z_{i,m} = z_{i, 0} + \triangle x\sum^s_{n=1}a_{mn}\left(\delta_xZ_{i,n}\right)\,,\\
        &z_{i,1} = z_{i, 0} + \triangle x\sum^s_{n=1}b_{n}\left(\delta_xZ_{i,n}\right)\,,\\
        &Z_{i,m} = z_{0,m} + \triangle t\sum^r_{j=1}\ob{a}_{ij}\left(\delta_t^A Z_{j,m}\right) + \triangle W\sum^r_{j=1}\ob{\alpha}_{ij}\left(\delta_t^M Z_{j,m}\right)\,,\\
        &z_{1,m} = z_{0,m} + \triangle t\sum^r_{j=1}\ob{b}_{j}\left(\delta_t^A Z_{j,m} \right)+ \triangle W\sum^r_{j=1}\ob{\beta}_{m}\left(\delta_t^M Z_{j,m}\right)\,,
         \label{eq:stochastic RK}
    \end{split}
\end{align}
together with the defining relations
\begin{align}
    \mathbf{M}\left( \delta_t^A Z_{i,m} \right) + \mathbf{K} \left( \delta_x Z_{i,m} \right)= \nabla_z H(Z_{i,m})\,,\quad \mathbf{M} \left( \delta_t^M Z_{i,m} \right)+ \wt{\mathbf{K}} \left( \delta_x Z_{i,m} \right)= \nabla_z\wt{H}(Z_{i,m})\,. \label{eq:stochastic RK Ham}
\end{align}
 Here, we have used the notations 
$$\nabla_z H(Z_{i,m}) = \nabla_z H(z) \big|_{z= Z_{i,m}}\, , \quad \text{and} \quad  \nabla_z\wt{H}(Z_{i,m}) =\nabla_z \wt{H}(z) \big|_{z= Z_{i,m}} \, .$$
In \eqref{eq:stochastic RK}, the Runge--Kutta coefficients $a_{ij}, \ob{a}_{ij}$, $\alpha_{ij}$, $b_i$, $\ob{b}_i$ and $\ob{\beta}_i$ are defined to satisfy the following consistency conditions
\begin{align*}
    d_i = \sum^r_{j=1}\ob{a}_{ij}\, , \quad d_i = \sum^r_{j=1}\ob{\alpha}_{ij}\,, \quad c_m = \sum^s_{n=1}a_{mn}\, , \quad \textrm{and} \quad \sum_{j=1}^r \ob{b}_j = \sum_{j=1}^r \ob{\beta}_j = \sum_{n=1}^s b_n = 1\,.
\end{align*}
The discrete operators $\delta_x$, $\delta^A_t$ and $\delta^M_t$ in  \eqref{eq:stochastic RK} and \eqref{eq:stochastic RK Ham} are defined as the discrete approximations to the spatial and temporal derivatives, respectively. In this work, we will use the standard forward differences,
\begin{align*}
    \delta_x Z_{i,n} = \frac{Z_{i, n+1} - Z_{i, n}}{\triangle x}\,,\quad \delta_t^A Z_{i,n} = \frac{Z_{i+1, n} - Z_{i, n}}{\triangle t}\,,\quad \delta_t^M Z_{i,n} = \frac{Z_{i+1, n} - Z_{i, n}}{\triangle W}\,.
\end{align*}
\begin{theorem}\label{thm: multi symplectic form thm}
When the following symplecticity conditions are satisfied
\begin{align}
    b_m b_n - a_{mn}b_m - b_n a_{nm} = 0\, , \quad \forall m,n=1,2,\ldots, s \label{eq:stochastic multisymplectic cond 1}
\end{align}
and 
\begin{align}
    \begin{split}
        \ob{b}_i\ob{b}_j - \ob{a}_{ji}\ob{b}_j - \ob{b}_i\ob{a}_{ij} &= 0\,,\\
        \ob{\beta}_i\ob{\beta}_j - \ob{\alpha}_{ji}\ob{\beta}_j - \ob{\beta}_i\ob{\alpha}_{ij} &= 0\,,\\
        \ob{b}_i\ob{\beta}_j - \ob{\alpha}_{ji}\ob{b}_j - \ob{b}_i\ob{\alpha}_{ij} &= 0\,,\\
        \ob{\beta}_i\ob{b}_j - \ob{a}_{ji}\ob{\beta}_j - \ob{\beta}_i\ob{a}_{ij} &= 0\,, \quad \forall i,j = 1,2,\ldots,r\,, \label{eq:stochastic multisymplectic cond 2}
    \end{split}
\end{align}
the Runge--Kutta methods defined above are multisymplectic, and we have the following discrete multisymplectic form conservation law,
\begin{align}
     \sum^s_{m=1}b_m\left[\omega_{1,m} - \omega_{0,m}\right]\triangle x + \sum^r_{i=1}\ob{b}_i\left[\kappa_{i, 1} - \kappa_{i, 0} \right]\triangle t + \sum^r_{i=1}\ob{\beta}_i\left[\wt{\kappa}_{i, 1} - \wt{\kappa}_{i, 0} \right]\triangle W = 0\,, \label{eq:discrete stoch ms CL}
\end{align}
where the discrete multisymplectic forms are defined by
\begin{align*}
    \omega_{i, m} := \frac12 \dd z_{i,m}\wedge\mathbf{M}\dd z_{i,m}\,,\quad \kappa_{i,m} := \frac12 \dd z_{i,m}\wedge\mathbf{K}\dd z_{i,m}\,,\quad \textrm{and} \quad \wt{\kappa}_{i,m} :=\frac12 \dd z_{i,m}\wedge\wt{\mathbf{K}} \dd z_{i,m}\,.
\end{align*}
\end{theorem}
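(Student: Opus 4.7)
The plan is to mimic the classical proof of symplecticity of Runge--Kutta methods, extended simultaneously to the spatial direction and to the stochastic temporal direction, and to exploit the two sets of symplecticity conditions \eqref{eq:stochastic multisymplectic cond 1}--\eqref{eq:stochastic multisymplectic cond 2} to kill all cross-terms. First I would apply the exterior derivative $\dd$ (acting on the dependent-variable jet, not on $t$ or $W$) to each of the four update equations in \eqref{eq:stochastic RK} and to the constitutive relations \eqref{eq:stochastic RK Ham}. This yields variational identities such as
\begin{align*}
\dd z_{1,m} &= \dd z_{i,0} + \triangle x \sum_{n=1}^{s} b_n \dd(\delta_x Z_{i,n}), \qquad \dd Z_{i,m} = \dd z_{i,0} + \triangle x \sum_{n=1}^{s} a_{mn} \dd(\delta_x Z_{i,n}),
\end{align*}
together with their temporal analogues (with both $\triangle t$ and $\triangle W$ contributions), and
\begin{align*}
\mathbf{M}\,\dd(\delta_t^A Z_{i,m}) + \mathbf{K}\,\dd(\delta_x Z_{i,m}) &= \nabla_z^2 H(Z_{i,m})\,\dd Z_{i,m},\\
\mathbf{M}\,\dd(\delta_t^M Z_{i,m}) + \wt{\mathbf{K}}\,\dd(\delta_x Z_{i,m}) &= \nabla_z^2 \wt H(Z_{i,m})\,\dd Z_{i,m}.
\end{align*}

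Next I would expand each difference $\omega_{1,m}-\omega_{0,m}$, $\kappa_{i,1}-\kappa_{i,0}$ and $\wt\kappa_{i,1}-\wt\kappa_{i,0}$ by substituting the corresponding update rule for $\dd z_{1,m}$, $\dd z_{i,1}$ and using the antisymmetry of $\mathbf{M},\mathbf{K},\wt{\mathbf{K}}$ in the wedge. The classical identity
\begin{equation*}
\tfrac12\bigl(\dd z_1\wedge\mathbf{M}\dd z_1 - \dd z_0\wedge\mathbf{M}\dd z_0\bigr) = \sum_i \bigl(b_i\,\dd Z_i\wedge\mathbf{M}\,\dd Y_i\bigr) + \tfrac12\sum_{i,j}(b_ib_j - b_ia_{ij} - b_ja_{ji})\,\dd Y_i\wedge\mathbf{M}\,\dd Y_j
\end{equation*}
generalises here with the role of the ``stages'' $\dd Y_i$ played by $\dd(\delta_x Z_{i,n})$ in the spatial direction, and by the pair $\dd(\delta_t^A Z_{j,m}), \dd(\delta_t^M Z_{j,m})$ in the temporal direction. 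Precisely, the symplecticity condition \eqref{eq:stochastic multisymplectic cond 1} annihilates the quadratic remainder in $\omega$, while the four conditions in \eqref{eq:stochastic multisymplectic cond 2} annihilate the four families of quadratic remainders in $\kappa_{i,1}-\kappa_{i,0}$ and $\wt\kappa_{i,1}-\wt\kappa_{i,0}$ arising from the $(\delta_t^A,\delta_t^A)$, $(\delta_t^M,\delta_t^M)$ and mixed $(\delta_t^A,\delta_t^M)$ pairings that multiply $\triangle t^2$, $(\triangle W)^2$ and $\triangle t\,\triangle W$ respectively.

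What remains after this cancellation is a weighted sum of ``stage $\times$ stage'' terms:
\begin{align*}
\sum_{m=1}^{s} b_m[\omega_{1,m}-\omega_{0,m}]\triangle x &= \triangle x\triangle t\sum_{i,m}\ob b_i b_m\,\dd Z_{i,m}\wedge\mathbf{M}\,\dd(\delta_t^A Z_{i,m}) + \triangle x\triangle W\sum_{i,m}\ob\beta_i b_m\,\dd Z_{i,m}\wedge\mathbf{M}\,\dd(\delta_t^M Z_{i,m}),
\end{align*}
and similarly for the spatial sums, which collapse to $\triangle t\triangle x\sum \ob b_i b_m\,\dd Z_{i,m}\wedge \mathbf{K}\,\dd(\delta_x Z_{i,m})$ and its $\wt{\mathbf{K}}$ analogue. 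I would then substitute the exterior-derivative form of \eqref{eq:stochastic RK Ham} into each of these surviving wedge pairings; the Hessian terms $\dd Z_{i,m}\wedge\nabla_z^2 H(Z_{i,m})\dd Z_{i,m}$ and $\dd Z_{i,m}\wedge\nabla_z^2\wt H(Z_{i,m})\dd Z_{i,m}$ vanish identically because the Hessians are symmetric while $\wedge$ is antisymmetric. The surviving terms group as $\triangle x\triangle t\sum\ob b_ib_m\bigl[\dd Z_{i,m}\wedge\mathbf{M}\dd(\delta_t^A Z_{i,m}) + \dd Z_{i,m}\wedge\mathbf{K}\dd(\delta_x Z_{i,m})\bigr] = 0$, and analogously the $\triangle x\triangle W$ pieces cancel using $\mathbf{M}\dd(\delta_t^M) + \wt{\mathbf{K}}\dd(\delta_x) = \nabla^2\wt H\,\dd Z$. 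Collecting everything yields precisely \eqref{eq:discrete stoch ms CL}.

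The main obstacle I expect is the careful indexing of the mixed temporal cross-terms: unlike the classical deterministic case one must separately track the four bilinear forms built from $\{\ob b_i,\ob\beta_i\}\otimes\{\ob a_{ij},\ob\alpha_{ij}\}$, and match each of them against the correct one of \eqref{eq:stochastic multisymplectic cond 2}. A clean bookkeeping device is to write the temporal update as a single vector $\dd z_{1,m} = \dd z_{0,m}+\sum_j (\ob b_j\triangle t\,\dd Y^A_j + \ob\beta_j\triangle W\,\dd Y^M_j)$ and expand $\dd z_{1,m}\wedge \mathbf{M}\,\dd z_{1,m}$ as a single $2\times 2$ block quadratic form, so that the four conditions of \eqref{eq:stochastic multisymplectic cond 2} appear as the four block entries of a vanishing matrix. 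With that bookkeeping in place, the remainder of the argument is a direct, if lengthy, calculation.
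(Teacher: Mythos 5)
Your proposal follows essentially the same route as the paper: differentiate the Runge--Kutta updates and the constitutive relations \eqref{eq:stochastic RK Ham}, use the classical symplectic-RK cancellation identity to reduce each of $\omega_{1,m}-\omega_{0,m}$, $\kappa_{i,1}-\kappa_{i,0}$, $\wt\kappa_{i,1}-\wt\kappa_{i,0}$ to a single sum of stage terms, then combine them via the differentiated constitutive relations and the symmetry of the Hessians. One slip to fix: you state that \eqref{eq:stochastic multisymplectic cond 1} annihilates the quadratic remainder in $\omega$ and \eqref{eq:stochastic multisymplectic cond 2} those in $\kappa,\wt\kappa$, but it is the other way around --- $\omega_{1,m}-\omega_{0,m}$ is a \emph{temporal} difference whose remainder involves the $(\delta_t^A,\delta_t^A)$, $(\delta_t^M,\delta_t^M)$ and mixed pairings killed by the four conditions \eqref{eq:stochastic multisymplectic cond 2}, while $\kappa_{i,1}-\kappa_{i,0}$ and $\wt\kappa_{i,1}-\wt\kappa_{i,0}$ are \emph{spatial} differences whose remainder is killed by \eqref{eq:stochastic multisymplectic cond 1}; your displayed formulas and the closing block-matrix bookkeeping are consistent with this correct assignment, so only that one sentence needs correcting.
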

\begin{remark}
    Before continuing to the proof of the theorem, we remark that the symplecticity conditions \eqref{eq:stochastic multisymplectic cond 2} are precisely the symplecticity conditions of the stochastic symplectic partitioned Runge--Kutta methods for SDEs \cite{HT2018b}. 
\end{remark}
\begin{proof}
The proof is by direct calculation. First we compute discrete evolution of the $1$-form $\dd z$ by taking the exterior derivative of the stochastic collocation method \eqref{eq:stochastic RK} to have
\begin{align}
    \begin{split}
        &\dd Z_{i,m} = \dd z_{i, 0} + \triangle x\sum^s_{n=1}a_{mn}\dd\left(\delta_xZ_{i,n}\right)\,,\\
        &\dd z_{i,1} = \dd z_{i, 0} + \triangle x\sum^s_{n=1}b_{n}\dd\left(\delta_xZ_{i,n}\right)\,,\\
        &\dd Z_{i,m} = \dd z_{0,m} + \triangle t\sum^r_{j=1}\ob{a}_{ij}\dd\left(\delta_t^A Z_{j,m}\right) + \triangle W\sum^r_{j=1}\ob{\alpha}_{ij}\dd\left(\delta_t^M Z_{j,m}\right)\,,\\
        &\dd z_{1,m} = \dd z_{0,m} + \triangle t\sum^r_{j=1}\ob{b}_{j}\dd\left(\delta_t^A Z_{j,m}\right) + \triangle W\sum^r_{j=1}\ob{\beta}_{m}\dd\left(\delta_t^M Z_{j,m}\right)\,,
    \end{split} \label{eq:stochastic RK 1-form}
\end{align}
where the notations for the evaluation of $1$-forms at discrete points follow from the notations of evaluation of functions. Then, the expression of $\kappa_{i,1}$ can be expanded as
\begin{align}
    \begin{split}
    \kappa_{i,1} &= \frac12\left(\dd z_{i, 0} + \triangle x\sum^s_{n=1}b_{n} \dd \left(\delta_xZ_{i,n}\right)\right) \wedge \mathbf{K}\left(\dd z_{i,0} + \triangle x\sum^s_{n=1}b_{n} \dd \left(\delta_x Z_{i,n}\right)\right)\\
    & = \frac12 \dd z_{i, 0}\wedge \mathbf{K} \dd z_{i, 0} +\frac12 \triangle x\sum^s_{n=1}b_n \left(\dd z_{i, 0} \wedge \mathbf{K} \dd \left(\delta_xZ_{i,n}\right) + \dd \left(\delta_xZ_{i,n}\right)\wedge \mathbf{K}\dd z_{i, 0} \right)\\
    & \qquad \qquad \qquad + \frac12(\triangle x)^2\sum^s_{m=1}\sum^s_{n=1}b_mb_n  \dd \left(\delta_xZ_{i,n}\right) \wedge \mathbf{K} \dd \left(\delta_x Z_{i,m}\right)\\
    & =  \frac12 \dd z_{i, 0}\wedge \mathbf{K} \dd z_{i, 0} + \frac12\triangle x\sum^s_{n=1}b_n \dd Z_{i,n}\wedge \mathbf{K}  \dd \left(\delta_x Z_{i,n}\right) + \frac12\triangle x\sum^s_{n=1}b_n \dd\left(\delta_x Z_{i,n}\right) \wedge \mathbf{K}\dd Z_{i,n}  \\
    & \qquad \qquad \qquad +\frac12 (\triangle x)^2\sum^s_{n=1}\sum^s_{m=1}\left(b_n b_m - b_na_{nm} - b_ma_{mn}\right) \dd\left(\delta_xZ_{i,n}\right) \wedge \mathbf{K} \dd\left(\delta_x Z_{i,m}\right)\\
    & = \kappa_{i,0} + \triangle x\sum^s_{n=1}b_n \dd Z_{i,n}\wedge \mathbf{K}\dd\left(\delta_xZ_{i,n}\right)\,.
    \end{split} 
\end{align}
In the above, we have used the second equation in equation set \eqref{eq:stochastic RK 1-form} in the first equality,  the first equation of  \eqref{eq:stochastic RK} in the third equality, and finally the skew symmetric property of $\mathbf{K}$ and  the symplecticity condition \eqref{eq:stochastic multisymplectic cond 1} in the fourth equality. Through similar calculations by using the equations \eqref{eq:stochastic RK},  \eqref{eq:stochastic RK 1-form} and the symplecticity condition \eqref{eq:stochastic multisymplectic cond 2}, we additionally obtain
\begin{align}
    \wt{\kappa}_{i,1} &= \wt{\kappa}_{i,0} + \triangle x\sum^s_{n=1}b_n \dd Z_{i,n}\wedge\wt{\mathbf{K}} \dd\left(\delta_xZ_{i,n}\right),\\
    \omega_{1,m} &= \omega_{0,m} + \sum^r_{i=1} \dd Z_{i,m}\wedge \mathbf{M}\left(\triangle t\ob{b}_i \dd \left(\delta_t^A Z_{i,m}\right) + \triangle W \ob{\beta}_i \dd \left(\delta_t^M Z_{i,m}\right) \right).
\end{align}
Taking the exterior derivative of the defining relations \eqref{eq:stochastic RK Ham}, we obtain
\begin{align}
    &\mathbf{M} \dd\left(\delta_t^A Z_{i,m}\right) + \mathbf{K} \dd \left(\delta_x Z_{i,m}\right) = \mathbf{H}''(Z_{i,m})\dd Z_{i,m}\,,\quad \mathbf{M} \dd\left(\delta_t^M Z_{i,m}\right) + \wt{\mathbf{K}} \dd\left(\delta_x Z_{i,m}\right) = \mathbf{\wt{H}}''(Z_{i,m})\dd Z_{i,m}\,,
\end{align}
where $\mathbf{H}''$ and $\mathbf{\wt{H}}''$ are the shorthands for the Hessians, $\left(\frac{\p^2 H}{\p z^i \p z^j}\right)$ and  $\left(\frac{\p^2 \wt{H}}{\p z^i \p z^j}\right)$, respectively. Then, we have
\begin{align*}
\begin{split}
    &\sum^s_{m=1}b_m\left[\omega_{1,m} - \omega_{0,m}\right]\triangle x + \sum^r_{i=1}\ob{b}_i\left[\kappa_{i, 1} - \kappa_{i, 0} \right]\triangle t + \sum^r_{i=1}\ob{\beta}_i\left[\wt{\kappa}_{i, 1} - \wt{\kappa}_{i, 0} \right]\triangle W\\
    &= \triangle x \sum^r_{i=1} \sum^s_{m=1}b_m\dd Z_{i,m}\wedge\left( \ob{b}_i\triangle t\left( \mathbf{M}\dd\left(\delta_t^A Z_{i,m}\right) + \mathbf{K}\dd \left(\delta_x Z_{i,m}\right)\right) + \ob{\beta}_i\triangle W\left( \mathbf{M}\dd\left(\delta_t^M Z_{i,m}\right) + \wt{\mathbf{K}}\dd \left(\delta_x Z_{i,m}\right)\right) \right)\\
    &= \triangle x \sum^r_{i=1}\sum^s_{m=1}b_m \dd Z_{i,m}\wedge\left(\ob{b}_i \triangle t\mathbf{H}''\dd Z_{i,m} + \ob{\beta}_i \triangle W\mathbf{\wt{H}}''\dd Z_{i,m}\right)\\& = 0\,,
\end{split}
\end{align*}
where in the last equality we have used the symmetry property of $\mathbf{H}''$ and $\mathbf{\wt{H}}''$ to obtain the required result.
\end{proof}
\begin{remark}
    The discrete multisymplectic form conservation law can be thought as the approximation of integral of \eqref{eq:stoch structural conservation laws} in the domain $[0,\triangle x] \times [0,\triangle t]$,
    \begin{align*}
        \int^{\triangle x}_0 \left( \omega(z(x,\triangle t)) - \omega(z(x,0)) \right) \, dx &+ \int^{\triangle t}_0  \left(  \kappa(z(\triangle x, t)) - \kappa(z(0,t))\right) \, dt \\
        &+ \int^{\triangle t}_0 \left( \wt{\kappa}(z(\triangle x, t)) - \wt{\kappa}(z(0,t))\right)\circ dW_t = 0\,.
    \end{align*}
\end{remark}
\begin{remark}[Bounded Brownian increments]
    The increment of Wiener process $\triangle W$ defined in \eqref{eq:brownian increments} is unbounded. This can degrade the convergence property of the nonlinear solver in the implicit multisympletic method defined by \eqref{eq:stochastic RK} and \eqref{eq:stochastic RK Ham}. 
    However, one can replace the increments appearing in the numerical scheme \eqref{eq:stochastic RK} with a truncated random variable $\ob{\triangle W}$ defined by (e.g., \cite{MRT2002b,MRT2002a, milstein2004stochastic, Woodfield2024})
    \begin{align*}
        \ob{\triangle W} = \begin{cases} 
            U\,, & \text{if } \triangle W > U\, , \\
            \triangle W\,, & \text{if } |\triangle W| \leq U\, , \\
            -U\,,& \text{if } \triangle W < -U\, ,
        \end{cases}
    \end{align*}
    where $U>0$ is constant determined by the problem such that the mean square order of convergence is unaffected. One example of a $\triangle t$ dependent $U$ can be found to be $U(\triangle t) = \sqrt{2k|\operatorname{ln}\triangle t|}$ for $k \geq 1$.
\end{remark}
\paragraph{Discrete momentum conservation.}
We consider the special case where $H(z)$ and $\wt{H}(z)$ are quadratic in $z$, i.e., they can be written in the following form
\begin{align}
    H(z) = \left<z, \mathbf{A} z\right>, \quad \wt{H}(z) = \left<z, \wt{\mathbf{A}} z\right>\,, \label{eq:quadratic Hams}
\end{align}
for some symmetric matrices $\mathbf{A}$ and $\wt{\mathbf{A}}$. In this case, we obtain additional discrete conservation laws from the scheme defined by \eqref{eq:stochastic RK} and \eqref{eq:stochastic RK Ham} 
that correspond to discrete versions of the continuous $1$-form conservation laws \eqref{eq:stoch conservations laws}. 

\begin{theorem}
    In addition to the symplecticity conditions \eqref{eq:stochastic multisymplectic cond 1} and \eqref{eq:stochastic multisymplectic cond 2} being satisfied, when the Hamiltonians satisfy \eqref{eq:quadratic Hams},
    we have the discrete momentum conservation law 
    \begin{align}
        \sum^s_{m=1}b_m\left[I_{1,m} - I_{0,m}\right]\triangle x + \sum^r_{i=1}\ob{b}_i\left[G_{i,1} - G_{i,0} \right]\triangle t + \sum^r_{i=1}\ob{\beta}_i\left[\wt{G}_{i,1} - \wt{G}_{i,0} \right]\triangle W = 0\,. \label{eq:discrete momentum conserversion law}
    \end{align}
    In equation \eqref{eq:discrete momentum conserversion law}, we have defined 
    \begin{equation}
    \begin{aligned}
        I_{i,m} &= \frac{1}{2}\scp{\mathbf{M}  \left(\delta_x z_{i,m}\right)}{z_{i,m}}\, ,\\
        G_{i,m} &= H(z_{i,m}) - \frac{1}{2}\scp{\mathbf{M} \left(\delta^A_t z_{i,m}\right)}{z_{i,m}} \,, \\
        \wt{G}_{i,m} &= \wt{H}(z_{i,m}) - \frac{1}{2}\scp{\mathbf{M} \left(\delta^M_t z_{i,m}\right)}{z_{i,m}}\,,
    \end{aligned}
    \end{equation}
    where $\scp{\cdot}{\cdot}$ denotes the inner product of the vectors and it is not integrated over the domain.
\end{theorem}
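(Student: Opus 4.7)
The plan is to mirror the proof of Theorem~\ref{thm: multi symplectic form thm}, replacing its exterior products of $\dd Z_{i,m}$ by the antisymmetric bilinear pairings $\langle\mathbf{M}\cdot,\cdot\rangle$, $\langle\mathbf{K}\cdot,\cdot\rangle$, $\langle\wt{\mathbf{K}}\cdot,\cdot\rangle$ induced by the skew-symmetry of these matrices. The one new ingredient beyond Theorem~\ref{thm: multi symplectic form thm} is the classical quadratic-invariant preservation identity for symplectic Runge--Kutta methods: whenever the spatial symplecticity condition \eqref{eq:stochastic multisymplectic cond 1} holds and $H$ is quadratic,
$$H(z_{i,1}) - H(z_{i,0}) \;=\; \triangle x \sum_{n=1}^s b_n \bigl\langle \nabla H(Z_{i,n}),\, \delta_x Z_{i,n}\bigr\rangle,$$
with the analogous identity for $\wt{H}$. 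This is established by expanding both sides, using the symmetry of the Hessian $\mathbf{A}$ (respectively $\wt{\mathbf A}$), and symmetrizing the resulting double sum over spatial stages via \eqref{eq:stochastic multisymplectic cond 1}.

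First I would expand $I_{1,m}-I_{0,m} = \tfrac{1}{2}\langle \mathbf{M}\delta_x z_{1,m},z_{1,m}\rangle - \tfrac{1}{2}\langle \mathbf{M}\delta_x z_{0,m},z_{0,m}\rangle$ by substituting the temporal RK updates of $z_{1,m}$ (and the induced update of $\delta_x z_{1,m}$). Skew-symmetry of $\mathbf{M}$ converts the $(\triangle t)^2$, $(\triangle W)^2$ and the two mixed $\triangle t\,\triangle W$ quadratic cross terms into precisely the four combinations on the left-hand sides of \eqref{eq:stochastic multisymplectic cond 2}, which therefore vanish, leaving
$$I_{1,m}-I_{0,m} \;=\; \triangle t\sum_{i=1}^r \bar b_i\bigl\langle \mathbf{M}(\delta^A_t Z_{i,m}),\,\delta_x Z_{i,m}\bigr\rangle \;+\; \triangle W\sum_{i=1}^r \bar\beta_i\bigl\langle \mathbf{M}(\delta^M_t Z_{i,m}),\,\delta_x Z_{i,m}\bigr\rangle.$$
An entirely analogous spatial calculation, combining the quadratic-invariant identity above for $H(z_{i,1})-H(z_{i,0})$ with the same skew-symmetric telescoping (now under \eqref{eq:stochastic multisymplectic cond 1}) applied to the $\mathbf{M}$-term inside $G$, yields
$$G_{i,1}-G_{i,0} \;=\; \triangle x\sum_{n=1}^s b_n\bigl\langle \nabla H(Z_{i,n}) - \mathbf{M}(\delta^A_t Z_{i,n}),\,\delta_x Z_{i,n}\bigr\rangle,$$
and the analogous expression with $\nabla\wt{H}$ and $\delta^M_t$ for $\wt{G}_{i,1}-\wt{G}_{i,0}$.

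To conclude I would substitute these three telescoped expressions into the left-hand side of \eqref{eq:discrete momentum conserversion law}. Using the defining relations \eqref{eq:stochastic RK Ham} to replace $\nabla H(Z_{i,n}) - \mathbf{M}\delta^A_t Z_{i,n}$ by $\mathbf{K}\delta_x Z_{i,n}$ (respectively $\nabla\wt{H} - \mathbf{M}\delta^M_t$ by $\wt{\mathbf{K}}\delta_x$), the gradient contributions from $G$ and $\wt{G}$ collapse to $\langle \mathbf{K}\delta_x Z_{i,n},\delta_x Z_{i,n}\rangle = 0$ and $\langle\wt{\mathbf{K}}\delta_x Z_{i,n},\delta_x Z_{i,n}\rangle = 0$ by skew-symmetry. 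The residual $\mathbf{M}$-pairings from the $G$ and $\wt{G}$ expansions then cancel exactly against the $\mathbf{M}$-pairings from the $I$ expansion after interchanging the outer $\sum_m b_m$ with $\sum_i \bar b_i$ or $\sum_i \bar\beta_i$, giving the claimed identity.

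The main obstacle I anticipate is bookkeeping rather than conceptual difficulty: tracking signs and index swaps through three simultaneous telescopings and correctly matching the two mixed $\triangle t\,\triangle W$ cross-term orderings in $I_{1,m}-I_{0,m}$ (those with $\bar b_i\bar\beta_j$ versus $\bar\beta_i\bar b_j$) to the two corresponding mixed conditions in \eqref{eq:stochastic multisymplectic cond 2}. The fact that $\triangle W$ enters only bilinearly and in the Stratonovich sense is essential so that no Itô-drift corrections intrude into these cancellations.
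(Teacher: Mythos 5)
Your overall strategy is the paper's: telescope $I$, $G$, $\wt{G}$ through the Runge--Kutta updates, kill the quadratic stage-coupling terms with the symplecticity conditions, and finish with the defining relations \eqref{eq:stochastic RK Ham} and skew-symmetry of $\mathbf{K}$, $\wt{\mathbf{K}}$. The quadratic-invariant identity you isolate for $H(z_{i,1})-H(z_{i,0})$ is exactly what the paper uses (in the form $2\triangle x\sum_n b_n\scp{Z_{i,n}}{\mathbf{A}(\delta_x Z_{i,n})}$). However, the closed forms you write for the telescoped increments are not identities, and the discrepancy breaks your final cancellation. The correct expansion of $G_{i,1}-G_{i,0}$ leaves a \emph{symmetrized pair} of $\mathbf{M}$-pairings,
\begin{equation*}
-\frac{\triangle x}{2}\sum_{n=1}^s b_n\left[\scp{\mathbf{M}(\delta^A_t Z_{i,n})}{\delta_x Z_{i,n}}+\scp{\mathbf{M}\delta^A_t(\delta_x Z_{i,n})}{Z_{i,n}}\right],
\end{equation*}
not the single term $-\triangle x\sum_n b_n\scp{\mathbf{M}(\delta^A_t Z_{i,n})}{\delta_x Z_{i,n}}$; the two summands differ by a discrete integration by parts and cannot be merged. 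Likewise $I_{1,m}-I_{0,m}$ retains both halves, $\tfrac12\scp{\mathbf{M}\delta_x(\cdots)}{Z_{i,m}}$ and $\tfrac12\scp{\mathbf{M}(\delta_x Z_{i,m})}{\cdots}$, rather than collapsing to $\triangle t\sum_i\ob{b}_i\scp{\mathbf{M}(\delta^A_t Z_{i,m})}{\delta_x Z_{i,m}}+\triangle W\sum_i\ob{\beta}_i\scp{\mathbf{M}(\delta^M_t Z_{i,m})}{\delta_x Z_{i,m}}$.

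This is a genuine failure point rather than cosmetic: taking your formulas literally, $G_{i,1}-G_{i,0}=\triangle x\sum_n b_n\scp{\nabla_z H(Z_{i,n})-\mathbf{M}(\delta^A_t Z_{i,n})}{\delta_x Z_{i,n}}=\triangle x\sum_n b_n\scp{\mathbf{K}(\delta_x Z_{i,n})}{\delta_x Z_{i,n}}=0$ already, so there is no residual $\mathbf{M}$-pairing from $G$ left to cancel the $I$ contribution, and the total would reduce to $\sum_{i,m}b_m\ob{b}_i\triangle x\,\triangle t\scp{\mathbf{M}(\delta^A_t Z_{i,m})}{\delta_x Z_{i,m}}$ plus its martingale analogue, which is not zero. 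The correct accounting, which is what the paper does, is: the mixed second-difference terms $\scp{\mathbf{M}\delta^{A}_t(\delta_x Z_{i,m})}{Z_{i,m}}$ (and the $\delta^M_t$ analogue) cancel between the $I$ and $G,\wt{G}$ expansions using commutativity of $\delta_x$ with $\delta^A_t,\delta^M_t$; the two surviving halves, one from $I$ (after a skew-symmetry flip of $\scp{\mathbf{M}(\delta_x Z_{i,m})}{\delta^A_t Z_{i,m}}$) and one from $G$, then add to give coefficient $-1$ on $\scp{\mathbf{M}(\delta^{A}_t Z_{i,m})}{\delta_x Z_{i,m}}$, which combines with $\scp{\nabla_z H(Z_{i,m})}{\delta_x Z_{i,m}}$ to produce $\scp{\mathbf{K}(\delta_x Z_{i,m})}{\delta_x Z_{i,m}}=0$. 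So the idea is right, but the bookkeeping you flagged as the main risk is precisely where the argument as written fails, and the repair requires carrying the mixed second-difference terms through to the end.
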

\begin{proof}
    The proof is similar to that of discrete multisymplectic form formula \eqref{eq:discrete stoch ms CL}. Using \eqref{eq:stochastic RK}, multisymplectic conditions \eqref{eq:stochastic multisymplectic cond 1} and \eqref{eq:stochastic multisymplectic cond 2}, we obtain
    \begin{align*}
        \begin{split}
            G_{i,1} &= \scp{z_{i, 0} + \triangle x\sum^s_{n=1}b_{n}\left(\delta_xZ_{i,n}\right)}{\mathbf{A}\left(z_{i, 0} + \triangle x\sum^s_{n=1}b_{n}\left(\delta_xZ_{i,n}\right)\right)} \\
            & \qquad - \frac{1}{2}\scp{\mathbf{M}\delta^A_t\left(z_{i, 0} + \triangle x\sum^s_{n=1}b_{n}\left(\delta_xZ_{i,n}\right)\right)}{z_{i, 0} + \triangle x\sum^s_{n=1}b_{n}\left(\delta_xZ_{i,n}\right)}\\
            &=G_{i,0} + \triangle x\sum^s_{n=1}b_{n}\left[\scp{z_{i,0}}{\mathbf{A} \left(\delta_x Z_{i,n}\right)} + \scp{\delta_x Z_{i,n}}{\mathbf{A} z_{i,0}}\right] + (\triangle x)^2\sum^s_{n=1}\sum^s_{m=1}b_{n}b_{m}\scp{\delta_xZ_{i,n}}{\mathbf{A}\left(\delta_xZ_{i,m}\right)}\\
            & \qquad - \frac{\triangle x}{2}\sum^s_{n=1}b_{n}\left[\scp{\mathbf{M} \left(\delta^A_t z_{i,0}\right)}{\delta_x Z_{i,n}} + \scp{\mathbf{M}\delta^A_t\left(\delta_x Z_{i,n}\right)}{ z_{i,0}}\right] \\
            &\qquad - \frac{(\triangle x)^2}{2}\sum^s_{n=1}\sum^s_{m=1}b_{n}b_{m}\scp{\mathbf{M}\delta^A_t\left(\delta_xZ_{i,n}\right)}{\delta_xZ_{i,m}}\\
            &=G_{i,0} + 2\triangle x\sum_{n=1}^sb_n\scp{Z_{i,n}}{\mathbf{A}\left(\delta_x Z_{i,n}\right)} - \frac{\triangle x}{2}\sum_{n=1}^sb_n\left[\scp{\mathbf{M}\left(\delta^A_tZ_{i,n}\right)}{\delta_x Z_{i,n}} + \scp{\mathbf{M}\delta^A_t\left(\delta_xZ_{i,n}\right)}{Z_{i,n}}\right]\,.
        \end{split}
    \end{align*}
    Similarly, we have the following expressions
\begin{align}
\begin{split}
    \wt{G}_{i,1} &= \wt{G}_{i,0} + 2\triangle x\sum_{n=1}^sb_n\scp{Z_{i,n}}{\wt{\mathbf{A}}\left(\delta_x Z_{i,n}\right)} \\
    &\qquad \quad - \frac{\triangle x}{2}\sum_{n=1}^sb_n\left[\scp{\mathbf{M} \left(\delta^M_tZ_{i,n}\right)}{\delta_x Z_{i,n}} + \scp{\mathbf{M}\delta^M_t\left(\delta_xZ_{i,n}\right)}{Z_{i,n}}\right]\,,\\
    I_{1,m} &= I_{0,m} + \frac{1}{2}\sum^r_{i=1} \scp{\mathbf{M}\delta_x\left(\triangle t \ob{b}_i \left(\delta^A_t Z_{i,m}\right) + \triangle W \ob{\beta}_i \left(\delta^M_t Z_{i,m}\right)\right)}{Z_{i,m}} \\
    & \qquad \quad + \frac{1}{2}\sum^r_{i=1} \scp{\mathbf{M}\left(\delta_xZ_{i,m}\right)}{\triangle t \ob{b}_i \left(\delta^A_t Z_{i,m}\right) + \triangle W \ob{\beta}_i \left(\delta^M_t Z_{i,m}\right)}\,.
\end{split}
\end{align}
Then we have 
\begin{align*}
    \begin{split}
        &\sum^s_{m=1}b_m\left[I_{1,m} - I_{0,m}\right]\triangle x + \sum^r_{i=1}\ob{b}_i\left[G_{i,1} - G_{i,0} \right]\triangle t + \sum^r_{i=1}\ob{\beta}_i\left[\wt{G}_{i,1} - \wt{G}_{i,0} \right]\triangle W \\
        &=\sum^s_{m=1}\sum^r_{i=1}b_m\triangle x \Big[\left(\scp{\triangle t\ob{b}_i\nabla_z H(Z_{i,m})}{\delta_x Z_{i,m}} + \scp{\triangle W\ob{\beta}_i\nabla_z \wt{H}(Z_{i,m})}{\delta_x Z_{i,m}}\right) \\
        & \qquad \qquad\qquad\qquad  - \frac{1}{2}\triangle t \ob{b}_i\scp{\mathbf{M}\left(\delta^A_tZ_{i,m}\right)}{\delta_x Z_{i,m}} - \frac{1}{2}\triangle W \ob{\beta}_i\scp{\mathbf{M}\left(\delta^M_tZ_{i,m}\right)}{\delta_x Z_{i,m}}  \\
        &\qquad \qquad\qquad \qquad+ \frac{1}{2}\scp{\mathbf{M}\left(\delta_xZ_{i,m}\right)}{\triangle t \ob{b}_i \left(\delta^A_t Z_{i,m}\right) + \triangle W \ob{\beta}_i \left(\delta^M_t Z_{i,m}\right)}  \Big]\\
        &=\sum^s_{m=1}\sum^r_{i=1}b_m\triangle x \left\langle\triangle t\ob{b}_i\nabla_z H(Z_{i,m}) + \triangle W\ob{\beta}_i\nabla_z \wt{H}(Z_{i,m})\right. \\
        &\qquad \qquad\qquad \qquad\left.  - \triangle t \ob{b}_i\mathbf{M}\left(\delta^A_tZ_{i,m}\right) - \triangle W \ob{\beta}_i\mathbf{M} \left(\delta^M_tZ_{i,m}\right){\delta_x Z_{i,m}}\right\rangle\\
        &=\sum^s_{m=1}\sum^r_{i=1}b_m\triangle x \scp{\triangle t\ob{b}_i\mathbf{K}\left(\delta_xZ_{i,m} \right)+ \triangle W\ob{b}_i\mathbf{\wt{K}}\left(\delta_xZ_{i,m}\right)}{\delta_x Z_{i,m}}\\
        &= 0\,.
    \end{split}
\end{align*}
Here, we have used the commutative property of $\delta_x$ with $\delta^A_t$ and $\delta^M_t$; the defining relation \eqref{eq:stochastic RK Ham} in the third equality and the skew-symmetry property of matrices $\mathbf{K}$ and $\wt{\mathbf{K}}$ in the last equality. 
\end{proof}

\paragraph{Implicit midpoint scheme.} The most well-known examples of symplectic Runge--Kutta methods which satisfy the symplecticity conditions \eqref{eq:stochastic multisymplectic cond 1} and \eqref{eq:stochastic multisymplectic cond 2} include the deterministic implicit midpoint scheme \cite{LZ2011, MW2001} and the stochastic implicit midpoint scheme \cite{HT2018b, ZZHS2017}, respectively. Here, we have $s = r = 1$ and we represent the various coefficients appearing in the general form of the stochastic collocation method \eqref{eq:stochastic RK} in Butcher tableau form, 
\begin{align}
    \begin{split}
        \begin{array}{c|c}
        c_1 & a_{11}  \\
        \hline
         \rule{0pt}{1.em}& b_{1} 
        \end{array} \quad 
        = \quad
        \begin{array}{c|c}
        d_1 & \ob{a}_{11}  \\
        \hline
         \rule{0pt}{1.em}& \ob{b}_1 
        \end{array} \quad 
        = \quad 
        \begin{array}{c|c}
        d_1 & \ob{\alpha}_{11}  \\
        \hline
         \rule{0pt}{1.em}& \ob{\beta}_1 
        \end{array} \quad 
        = \quad
        \begin{array}{c|c}
        1/2 & 1/2  \\
        \hline
         \rule{0pt}{1.em}& 1  
        \end{array}\,.
    \end{split} \label{eq:IM SRK coeffs}
\end{align}
We introduce concrete discrete differencing operators, 
\begin{align*}
    \delta_t z_{i,m} = \delta^A_t z_{i,m} = \delta^M_t z_{i,m} = \frac{z_{i+1,m} - z_{i,m}}{\triangle t}\,, \quad \delta_x z_{i,m} = \frac{z_{i,m+1} - z_{i,m}}{\triangle x}\,,
\end{align*}
and for notational purposes introduce discrete averaging operators
\begin{align*}
    \mathcal{A}_t z_{i,m} = \frac{1}{2}(z_{i+1,m} + z_{i,m})\,, \quad \mathcal{A}_x z_{i,m} = \frac{1}{2}(z_{i,m+1} + z_{i,m})\,,
\end{align*}
such that the stochastic multisymplectic Runge--Kutta method defined by \eqref{eq:stochastic RK} and \eqref{eq:stochastic RK Ham} can be written as
\begin{align}
\begin{split}
    &\mathbf{M}\delta_t\mathcal{A}_x (z_{i,m}) + \mathbf{K}\delta_x \mathcal{A}_t (z_{i,m}) + 
    \mathbf{\wt{K}}\delta_x\mathcal{A}_t (z_{i,m})\frac{\triangle W}{\triangle t} = \nabla_z H(\mathcal{A}_t\mathcal{A}_x (z_{i,m})) + \nabla_z \wt{H}(\mathcal{A}_t\mathcal{A}_x (z_{i,m}))\frac{\triangle W}{\triangle t}\,.
\end{split}\label{eq:stochastic MS IM}
\end{align}

\begin{remark}
    In notation more familiar in numerical analysis of midpoint schemes, we define
    \begin{align*}
        z_{i+1/2,m} = \mathcal{A}_t z_{i,m}\,, \quad z_{i,m+1/2} = \mathcal{A}_x z_{i,m}\,, \quad z_{i+1/2,m+1/2} = \mathcal{A}_t \mathcal{A}_x z_{i,m}\,,
    \end{align*}
    and the stochastic multisymplectic Runge--Kutta method \eqref{eq:stochastic MS IM} can be equivalently expressed as
    \begin{align*}
    &\mathbf{M}\left(\frac{z_{i+1,m+\frac{1}{2}} - z_{i,m+\frac{1}{2}}}{\triangle t}\right) + \mathbf{K}\left(\frac{z_{i+\frac{1}{2},m+1} - z_{i+\frac{1}{2},m}}{\triangle x}\right) + 
        \mathbf{\wt{K}}\left(\frac{z_{i+\frac{1}{2},m+1} - z_{i+\frac{1}{2},m}}{\triangle x}\right)\frac{\triangle W}{\triangle t} \\
        & \qquad = \nabla_z H\left(z_{i+\frac{1}{2},m+\frac{1}{2}}\right) + \nabla_z \wt{H}\left(z_{i+\frac{1}{2},m+\frac{1}{2}}\right)\frac{\triangle W}{\triangle t}\,.
    \end{align*}
\end{remark}


\subsection{Numerical results: Stochastic NLS equations}
\label{subsec:NLS numerics}
In this subsection, we will numerically investigate the application of stochastic multisymplectic implicit midpoint scheme \eqref{eq:stochastic MS IM} to the classical example of NLS equations, under the two example stochastic perturbations defined by equations \eqref{eq:SALT NLS} and \eqref{eq:stoch NLS}, respectively. Recall that their multisymplectic formulations are given by \eqref{eq:SALT multi NLS} and \eqref{eq:stoch multi NLS}, respectively. 

\paragraph{Stochastic transport.}
For the stochastic transport case, we apply the stochastic multisymplectic implicit midpoint scheme \eqref{eq:stochastic MS IM} to the system  \eqref{eq:SALT multi NLS} and we obtain the following discrete equations
\begin{align}
    \begin{split}
        &\delta_t\mathcal{A}_xp_{i,m} + \mathcal{A}_t\delta_xw_{i,m} + \xi \mathcal{A}_t\delta_xp_{i,m} \frac{\triangle W}{\triangle t} = 2\kappa\left((\mathcal{A}_x\mathcal{A}_tp_{i,m})^2 + (\mathcal{A}_x\mathcal{A}_tq_{i,m})^2\right)\mathcal{A}_x\mathcal{A}_tq_{i,m}\,,\\
        &\delta_t\mathcal{A}_xq_{i,m} - \mathcal{A}_t\delta_xv_{i,m} + \xi \mathcal{A}_t\delta_xq_{i,m} \frac{\triangle W}{\triangle t} = -2\kappa\left((\mathcal{A}_x\mathcal{A}_tp_{i,m})^2 + (\mathcal{A}_x\mathcal{A}_tq_{i,m})^2\right)\mathcal{A}_x\mathcal{A}_tp_{i,m}\,,\\
        &\mathcal{A}_t\delta_xp_{i,m} = \mathcal{A}_t\mathcal{A}_xv_{i,m} \,,\\ &\mathcal{A}_t\delta_xq_{i,m}  = \mathcal{A}_t\mathcal{A}_xw_{i,m} \,.
    \end{split}\label{eq:IM RK SALT}
\end{align}
The third and fourth equations of \eqref{eq:IM RK SALT} imply the discrete constraints
\begin{align}
    \delta_xp_{i,m} = \mathcal{A}_xv_{i,m}\,\quad \text{and} \quad \delta_xq_{i,m} = \mathcal{A}_xw_{i,m}\,, \label{eq:discrete constraints}
\end{align}
which can be inserted back to the first and second equations of \eqref{eq:IM RK SALT} to have the stochastic integrator for \eqref{eq:SALT NLS},
\begin{align}
    \begin{split}
        &\delta_t\mathcal{A}_x^2p_{i,m} + \mathcal{A}_t\delta_x^2q_{i,m} + \xi \mathcal{A}_t\mathcal{A}_x\delta_xp_{i,m} \frac{\triangle W}{\triangle t} = 2\kappa \mathcal{A}_x\left(\left((\mathcal{A}_x\mathcal{A}_tp_{i,m})^2 + (\mathcal{A}_x\mathcal{A}_tq_{i,m})^2\right)\mathcal{A}_x\mathcal{A}_tq_{i,m}\right)\,,\\
        &\delta_t\mathcal{A}_x^2q_{i,m} - \mathcal{A}_t\delta_x^2p_{i,m} + \xi \mathcal{A}_t\mathcal{A}_x\delta_xq_{i,m} \frac{\triangle W}{\triangle t} = -2\kappa \mathcal{A}_x\left(\left((\mathcal{A}_x\mathcal{A}_tp_{i,m})^2 + (\mathcal{A}_x\mathcal{A}_tq_{i,m})^2\right)\mathcal{A}_x\mathcal{A}_tp_{i,m}\right)\,.
    \end{split}\label{eq:sms SALT method NLS}
\end{align}
We verify our numerical scheme by considering a stochastic soliton solution to the stochastic transport NLS equation \eqref{eq:SALT NLS}. The setup of the numerical experiment is as follows. For the spatial discretisation, we consider a periodic domain $\mcal{D} = [0,40]$ with $\triangle x = 0.1$. For the temporal discretisation, we take $t_0 = 0$ and $t_1 = 80$ with $\triangle t = 0.02$. For single bright soliton solutions, we take $\kappa = -1$ and start the simulation with initial conditions 
\begin{align}
    p(x, 0) = \frac{1}{\sqrt{2}}\operatorname{sech}\left(\frac{1}{\sqrt{2}}(x-15)\right)\operatorname{cos}\left(\frac{x}{20}\right)\,,\quad q(x, 0) = -\frac{1}{\sqrt{2}}\operatorname{sech}\left(\frac{1}{\sqrt{2}}(x-15)\right)\operatorname{sin}\left(\frac{x}{20}\right)\,,\label{eq:nls init}
\end{align}
such that we have the exact solution 
\begin{align*}
    p(x, t) = A(\wt{x},t)\operatorname{cos}\left(\frac{\wt{x}}{20} + \frac{199t}{400}\right) \,,\quad q(x, t) = - A(\wt{x},t)\operatorname{sin}\left(\frac{\wt{x}}{20} + \frac{199t}{400}\right)\,,
\end{align*}
where $A(x,t) = \frac{1}{\sqrt{2}}\operatorname{sech}\left(\frac{1}{\sqrt{2}}\left(x-\frac{t}{10}-15\right)\right)$ and $\wt{x}(t) := x - \xi W(t)$ for $W(t) = \int_0^t dW_s$. Taking $\xi = 0.1$, the solution behaviour of one single realisation is shown in Figure \ref{fig:SALT NLS single realisation}. From the evolution profile, one can see the stochastic motion of the soliton which keeps its form as it moves to the right. In the simulation, the absolute tolerance of the nonlinear solver in the implicit scheme is set to $10^{-6}$. This tolerance is reflected in the error of the global conservation law which is $\mcal{O}(10^{-6})$.
\begin{figure}[!ht]
    \centering
    \begin{subfigure}[b]{0.32\textwidth}
        \centering
        \includegraphics[width=\textwidth]{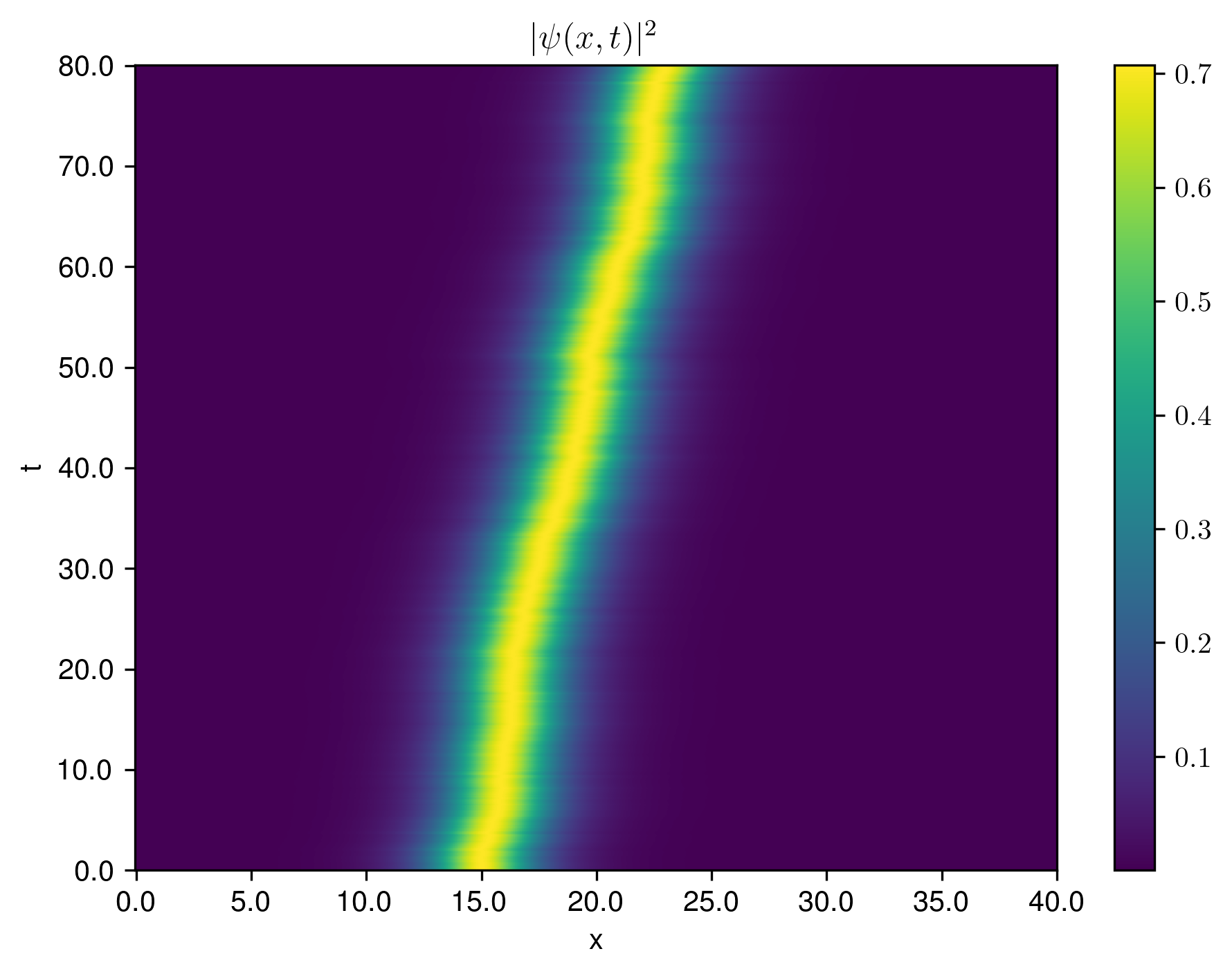}
    \end{subfigure}
    \begin{subfigure}[b]{0.32\textwidth}
        \centering
        \includegraphics[width=\textwidth]{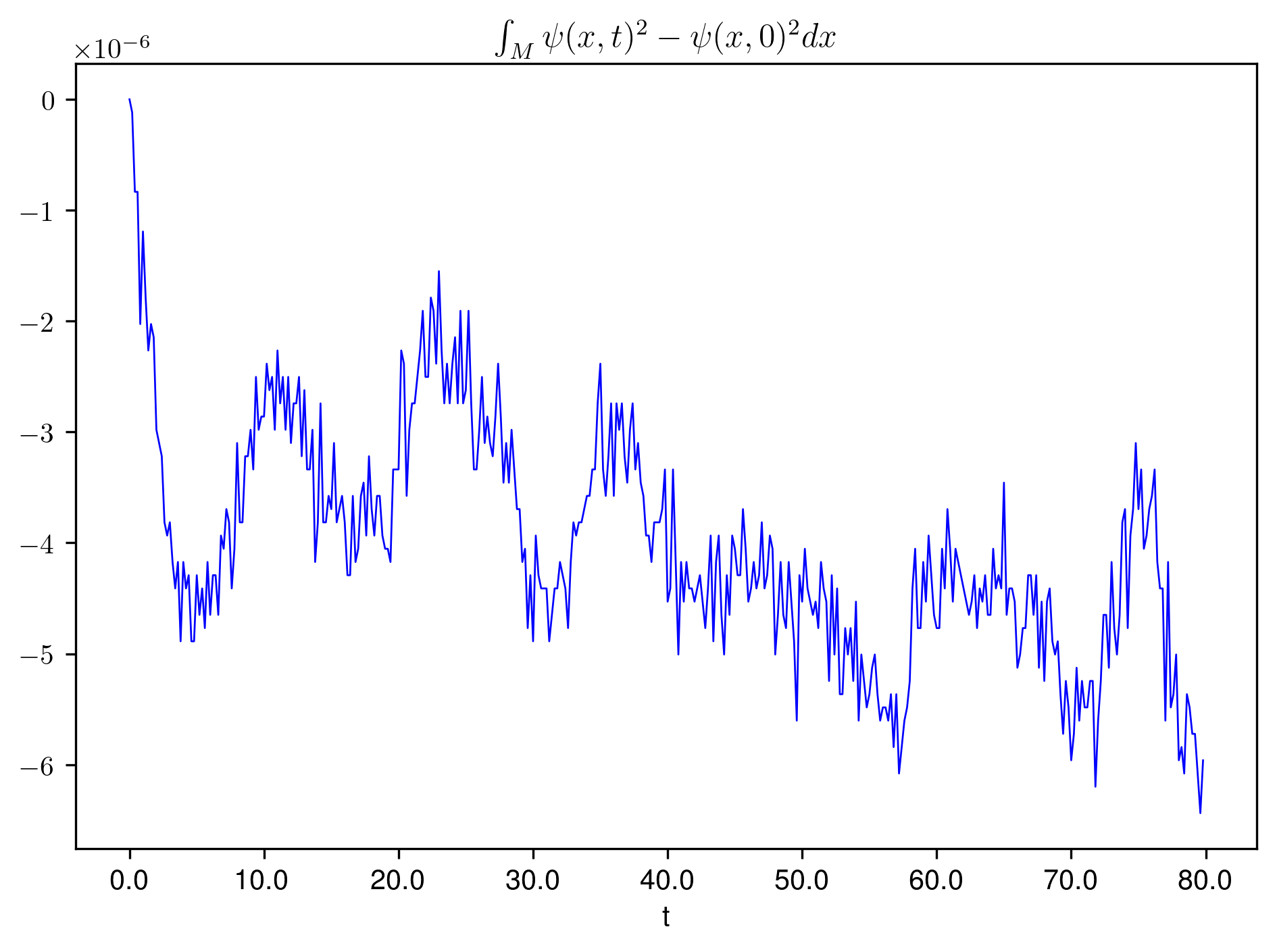}
    \end{subfigure}
    \begin{subfigure}[b]{0.32\textwidth}
        \centering
        \includegraphics[width=\textwidth]{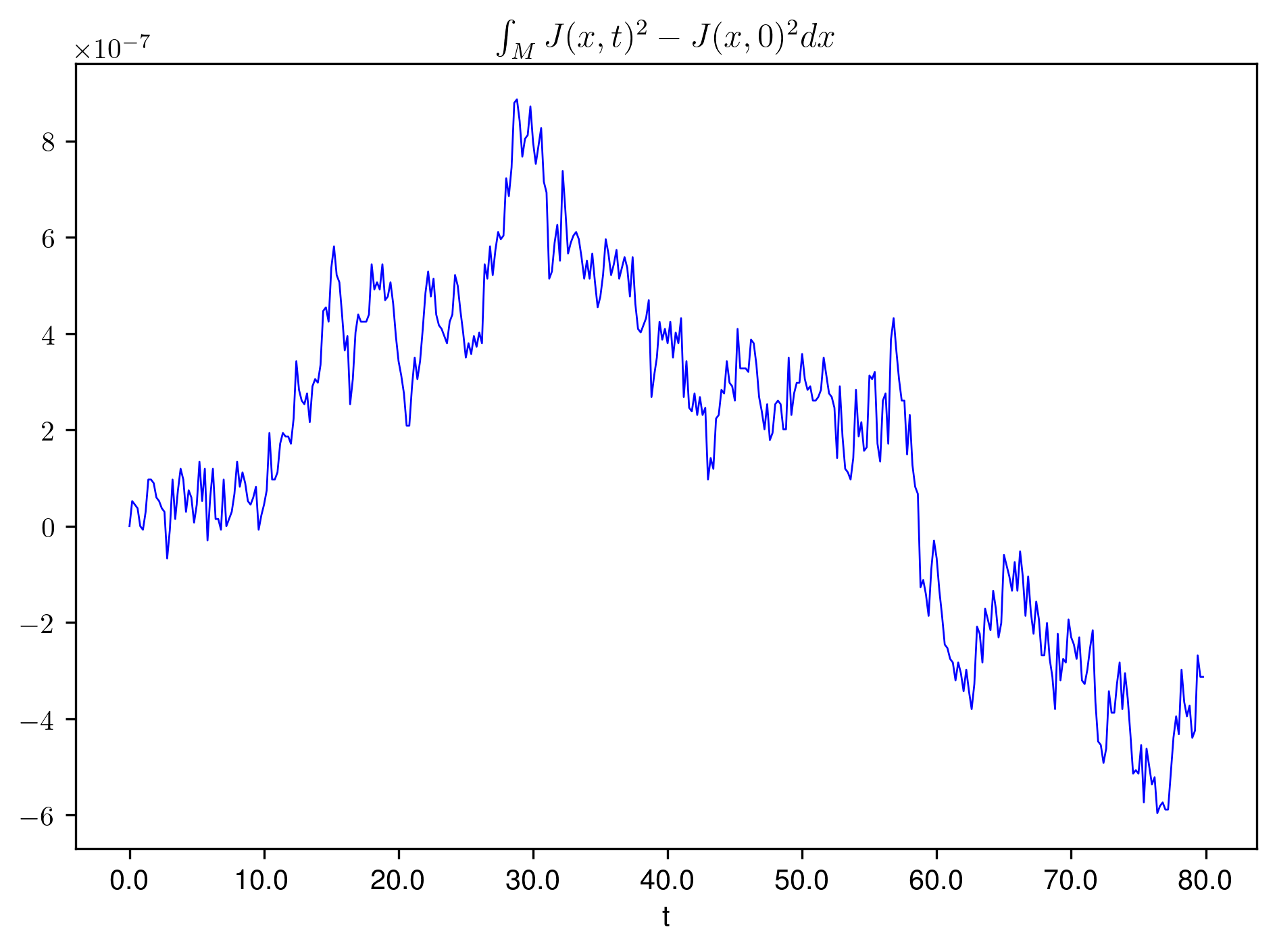}
    \end{subfigure}
    \caption{Evolution profile of $|\psi(x,t)|^2$ (left), the error of the global (domain integrated) conservation of wave density $|\psi(x,t)|^2$ (middle), and the error of the global conservation of wave momentum $J = \psi \nabla \psi^*$ (right) for the NLS equation with stochastic transport \eqref{eq:SALT NLS}, driven by a single realisation of Brownian motion.}
    \label{fig:SALT NLS single realisation}
\end{figure}

Additionally, we evaluate the ensemble statistics of the numerical scheme by considering an ensemble with $32$ members, all with the setup and parameters fixed as in the single realisation case. The results are shown in Figure \ref{fig:SALT NLS ensemble}. From the ensemble errors in global conservation of wave density $|\psi(x,t)|^2$, one sees that the ensemble standard deviation increases with $t$, while the ensemble mean error contains a drift. This is in contrast to the errors in global conservation of wave momentum, whose the ensemble mean oscillates around $0$.
\begin{figure}[!ht]
    \centering
    \begin{subfigure}[b]{0.45\textwidth}
        \centering
        \includegraphics[width=\textwidth]{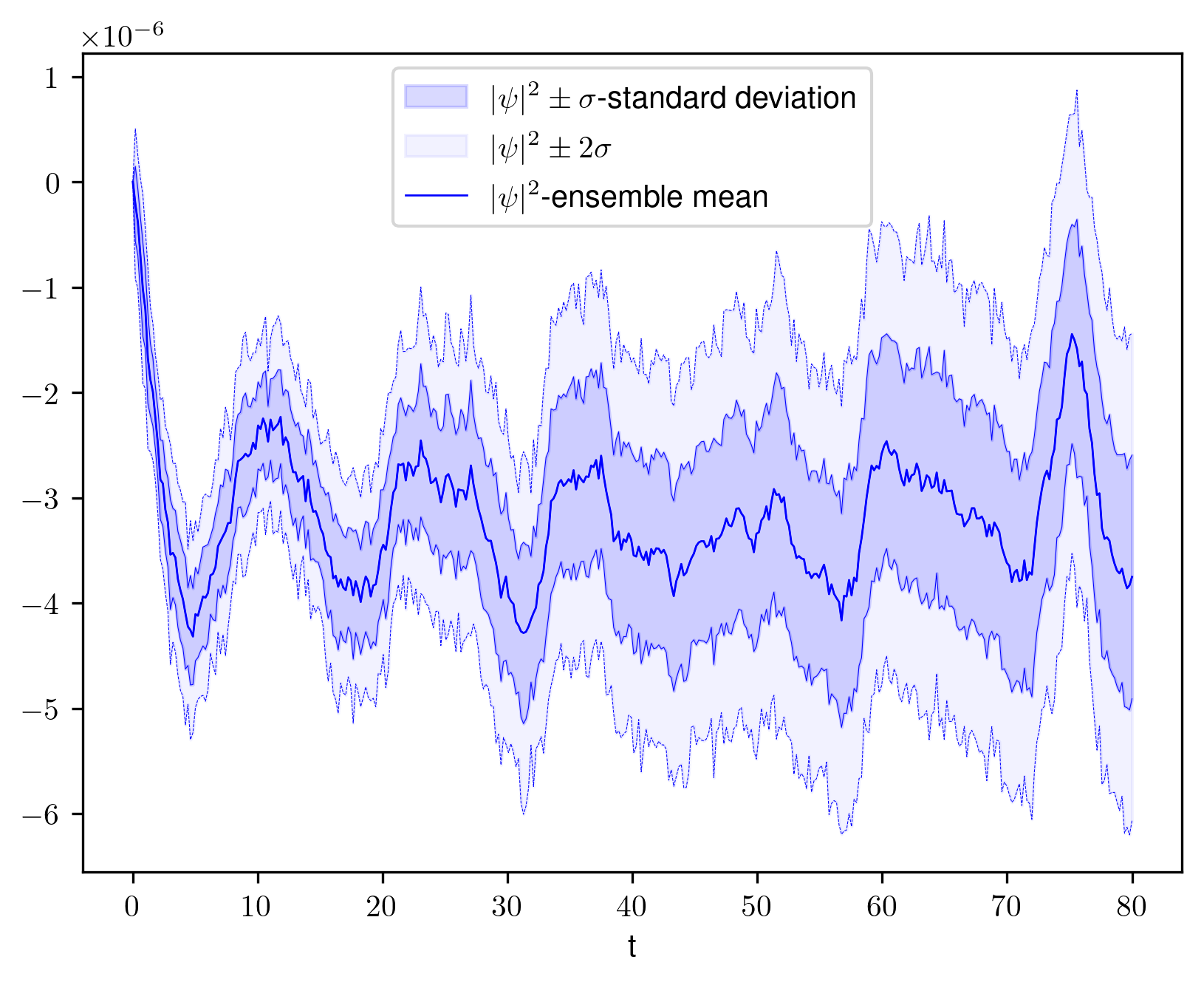}
    \end{subfigure}
    \begin{subfigure}[b]{0.45\textwidth}
        \centering
        \includegraphics[width=\textwidth]{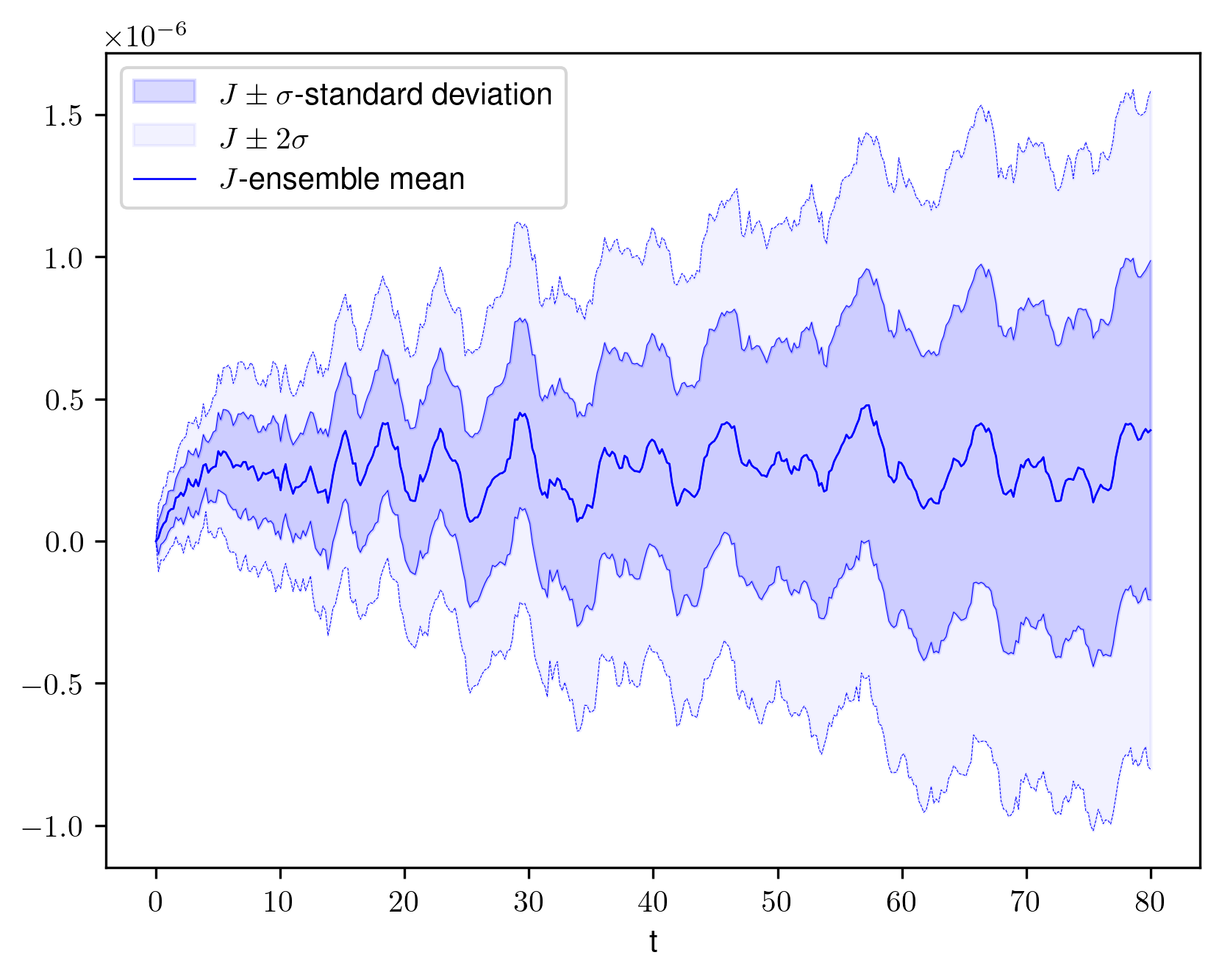}
    \end{subfigure}
    \caption{The ensemble mean and standard deviation of the errors in the global conservation of wave density $|\psi(x,t)|^2$ (left) and the global conservation of wave momentum $J = \psi \nabla \psi^*$ (right) for an ensemble of $32$ members, generated by $32$ independent realisation of Brownian motion in the NLS equation with stochastic transport \eqref{eq:SALT NLS}.}
    \label{fig:SALT NLS ensemble}
\end{figure}

\paragraph{Stochastic dispersion.}
For the stochastic dispersion case, we apply the stochastic multisymplectic implicit midpoint scheme \eqref{eq:stochastic MS IM} to the multisymplectic system \eqref{eq:stoch multi NLS} and obtain the following discrete equations
\begin{align}
    \begin{split}
        &\delta_t\mathcal{A}_xp_{i,m} + \mathcal{A}_t\delta_xw_{i,m} + \e \mathcal{A}_t\delta_xw_{i,m} \frac{\triangle W}{\triangle t} = 2\kappa\left((\mathcal{A}_x\mathcal{A}_tp_{i,m})^2 + (\mathcal{A}_x\mathcal{A}_tq_{i,m})^2\right)\mathcal{A}_x\mathcal{A}_tq_{i,m}\,,\\
        &\delta_t\mathcal{A}_xq_{i,m} - \mathcal{A}_t\delta_xv_{i,m} - \e \mathcal{A}_t\delta_xv_{i,m} \frac{\triangle W}{\triangle t} = -2\kappa\left((\mathcal{A}_x\mathcal{A}_tp_{i,m})^2 + (\mathcal{A}_x\mathcal{A}_tq_{i,m})^2\right)\mathcal{A}_x\mathcal{A}_tp_{i,m}\,,\\
        &\mathcal{A}_t\delta_xp_{i,m} + \e \mathcal{A}_t\delta_xp_{i,m}\frac{\triangle W}{\triangle t} = \mathcal{A}_t\mathcal{A}_xv_{i,m} + \e \mathcal{A}_t\mathcal{A}_xv_{i,m}\frac{\triangle W}{\triangle t}\,,\\
        &\mathcal{A}_t\delta_xq_{i,m} + \e \mathcal{A}_t\delta_xq_{i,m}\frac{\triangle W}{\triangle t} = \mathcal{A}_t\mathcal{A}_xw_{i,m} + \e \mathcal{A}_t\mathcal{A}_xw_{i,m}\frac{\triangle W}{\triangle t}\,.
    \end{split}\label{eq:IM RK NLS}
\end{align}
The third and fourth equations of \eqref{eq:IM RK NLS} imply the same constraints as that appearing in \eqref{eq:discrete constraints} which can be inserted back to the first and second equations of \eqref{eq:IM RK NLS} to have the stochastic integrator for \eqref{eq:stoch NLS},
\begin{align}
    \begin{split}
        &\delta_t\mathcal{A}_x^2p_{i,m} + \mathcal{A}_t\delta_x^2q_{i,m} + \e \mathcal{A}_t\delta_x^2q_{i,m} \frac{\triangle W}{\triangle t} = 2\kappa \mathcal{A}_x\left(\left((\mathcal{A}_x\mathcal{A}_tp_{i,m})^2 + (\mathcal{A}_x\mathcal{A}_tq_{i,m})^2\right)\mathcal{A}_x\mathcal{A}_tq_{i,m}\right)\,,\\
        &\delta_t\mathcal{A}_x^2q_{i,m} - \mathcal{A}_t\delta_x^2p_{i,m} - \e \mathcal{A}_t\delta_x^2p_{i,m} \frac{\triangle W}{\triangle t} = -2\kappa \mathcal{A}_x\left(\left((\mathcal{A}_x\mathcal{A}_tp_{i,m})^2 + (\mathcal{A}_x\mathcal{A}_tq_{i,m})^2\right)\mathcal{A}_x\mathcal{A}_tp_{i,m}\right)\,.
    \end{split}
\end{align}
For the numerical simulations, we consider the same parameters with the same bright soliton initial conditions \eqref{eq:nls init}. Here, we set $\e = 0.02$ and obtain Figures \ref{fig:stoch dispersion 1} from simulation with a single realisation. From the evolution profile, one can see that due to the stochastic nature of the dispersion parameter, the solitary wave deforms during the motion and develops small-scale structures.  

\begin{figure}[!ht]
    \centering
    \begin{subfigure}[b]{0.32\textwidth}
        \centering
        \includegraphics[width=\textwidth]{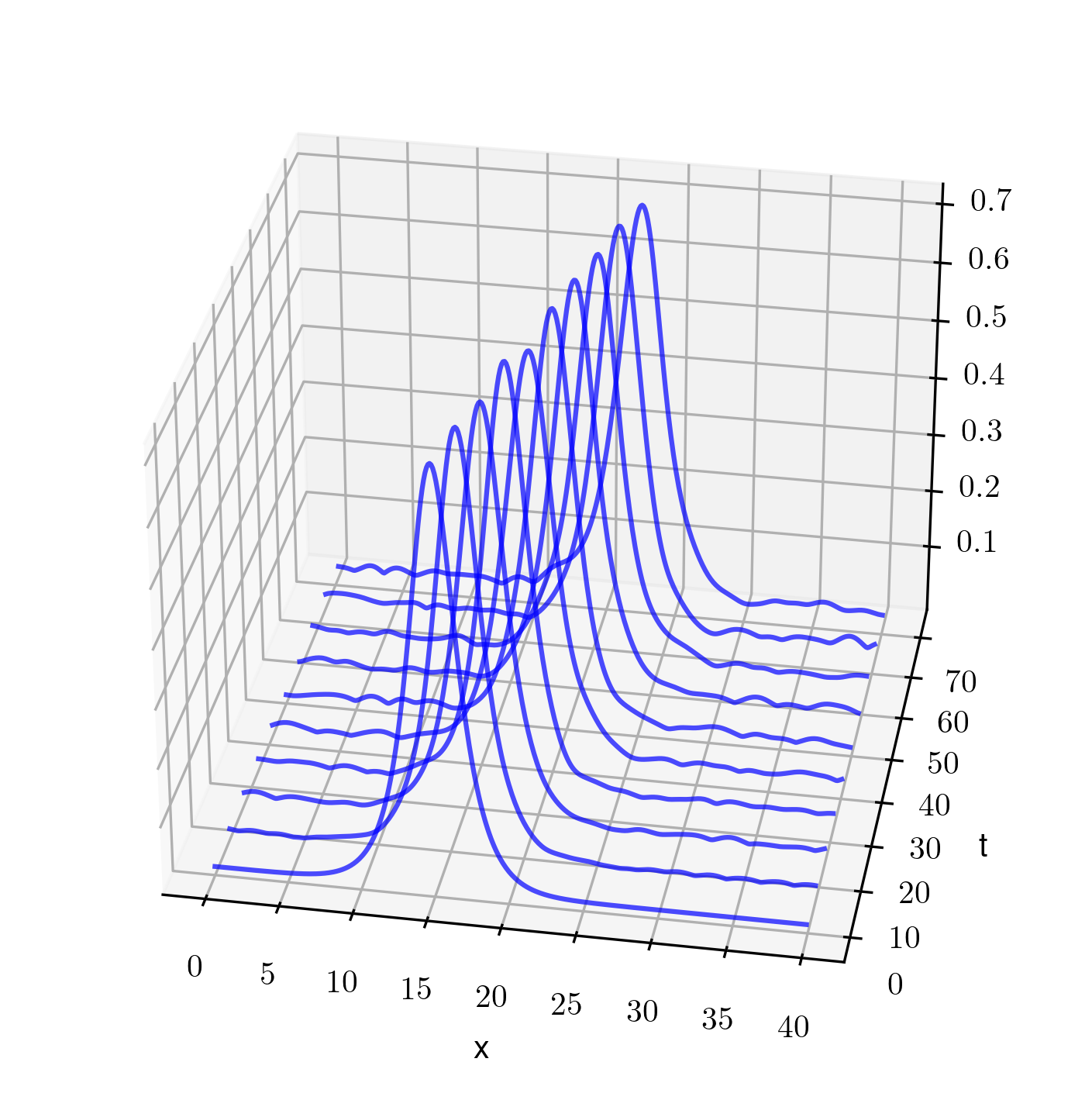}
    \end{subfigure}
    \begin{subfigure}[b]{0.32\textwidth}
        \centering
        \includegraphics[width=\textwidth]{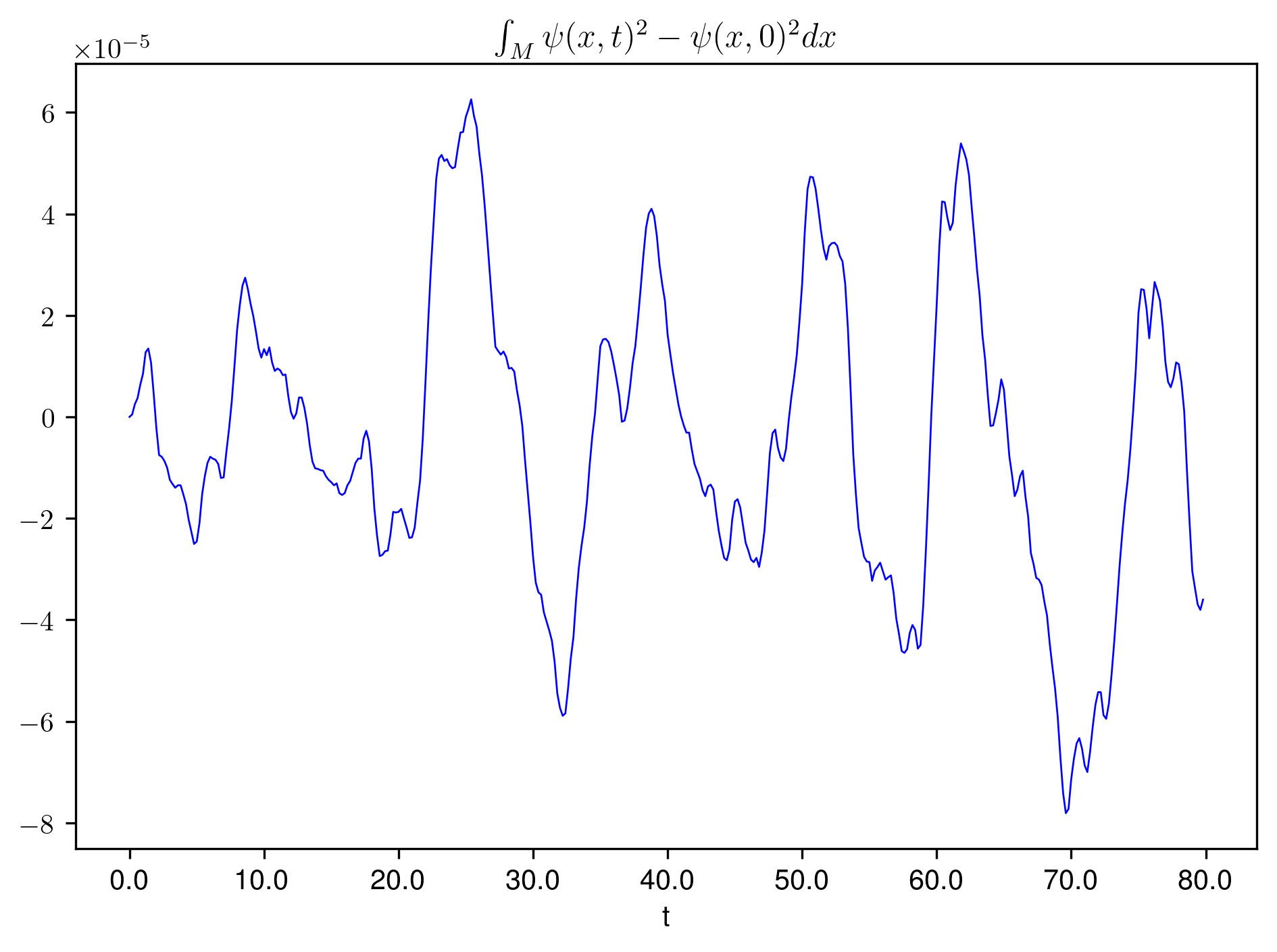}
    \end{subfigure}
    \begin{subfigure}[b]{0.32\textwidth}
        \centering
        \includegraphics[width=\textwidth]{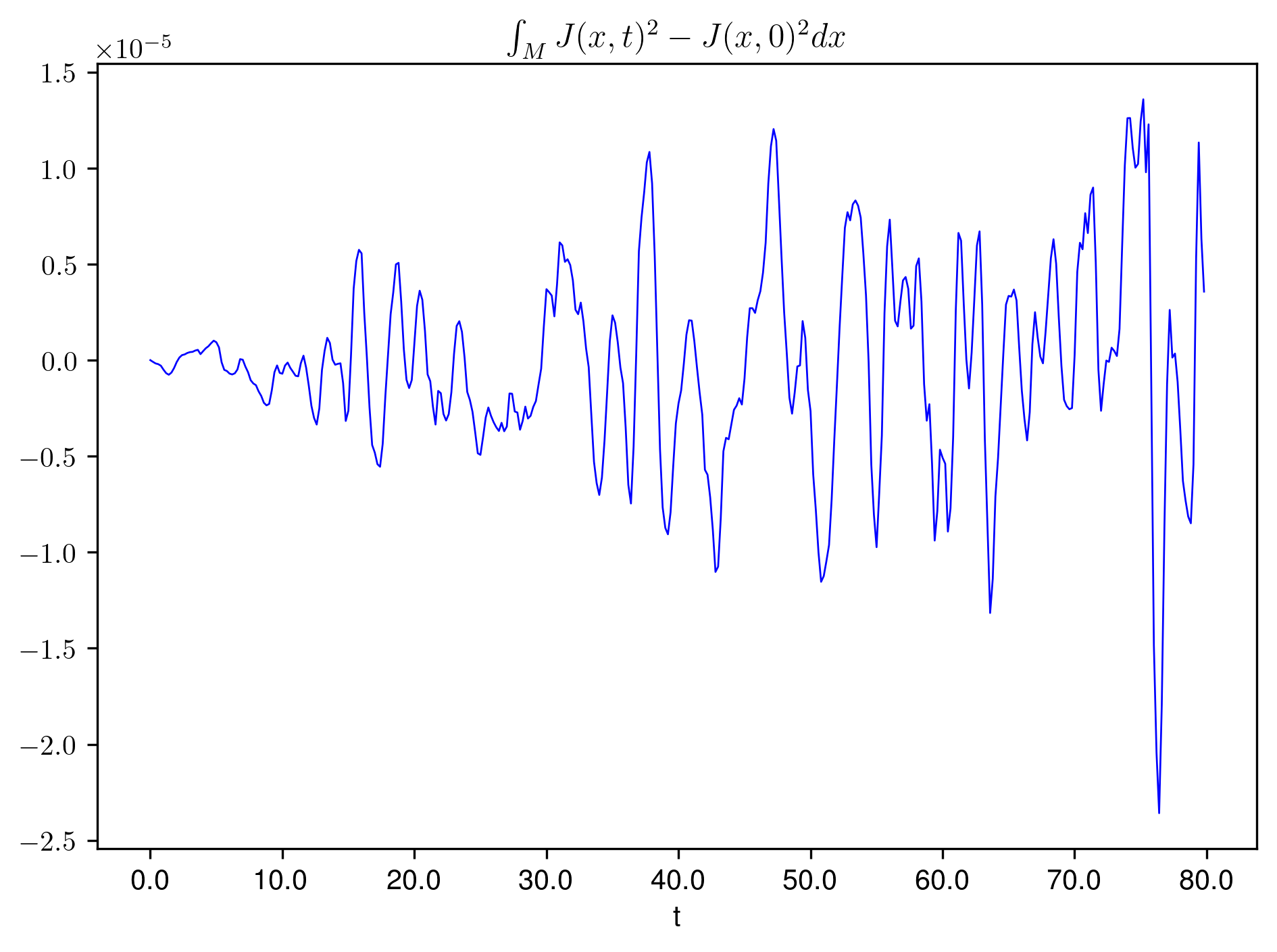}
    \end{subfigure}
        \caption{Evolution profile of $|\psi(x,t)|^2$ (left), the error of the global conservation of wave density $|\psi(x,t)|^2$ (middle), and the error of the global conservation of wave momentum $J = \psi \nabla \psi^*$ (right) for the NLS equation with stochastic dispersion \eqref{eq:stoch NLS}, driven by a single realisation of Brownian motion.}
    \label{fig:stoch dispersion 1}
\end{figure}

Similar to the stochastic transport case, we evaluate the ensemble statistics of the numerical scheme by considering an ensemble with $32$ members, all with the setup and parameters fixed as in the single realisation case. The results are shown in Figure \ref{fig:stoch dispersion 2}. Here, both the ensemble errors in global conservation of wave density and wave momentum have their ensemble mean oscillating around $0$ with increasing variance. In comparison with the ensemble statistics in Figure \ref{fig:SALT NLS ensemble}, the error ensemble variance is larger with stochastic dispersion than stochastic transport. 
\begin{figure}[!ht]
\centering
    \begin{subfigure}[b]{0.45\textwidth}
        \centering
        \includegraphics[width=\textwidth]{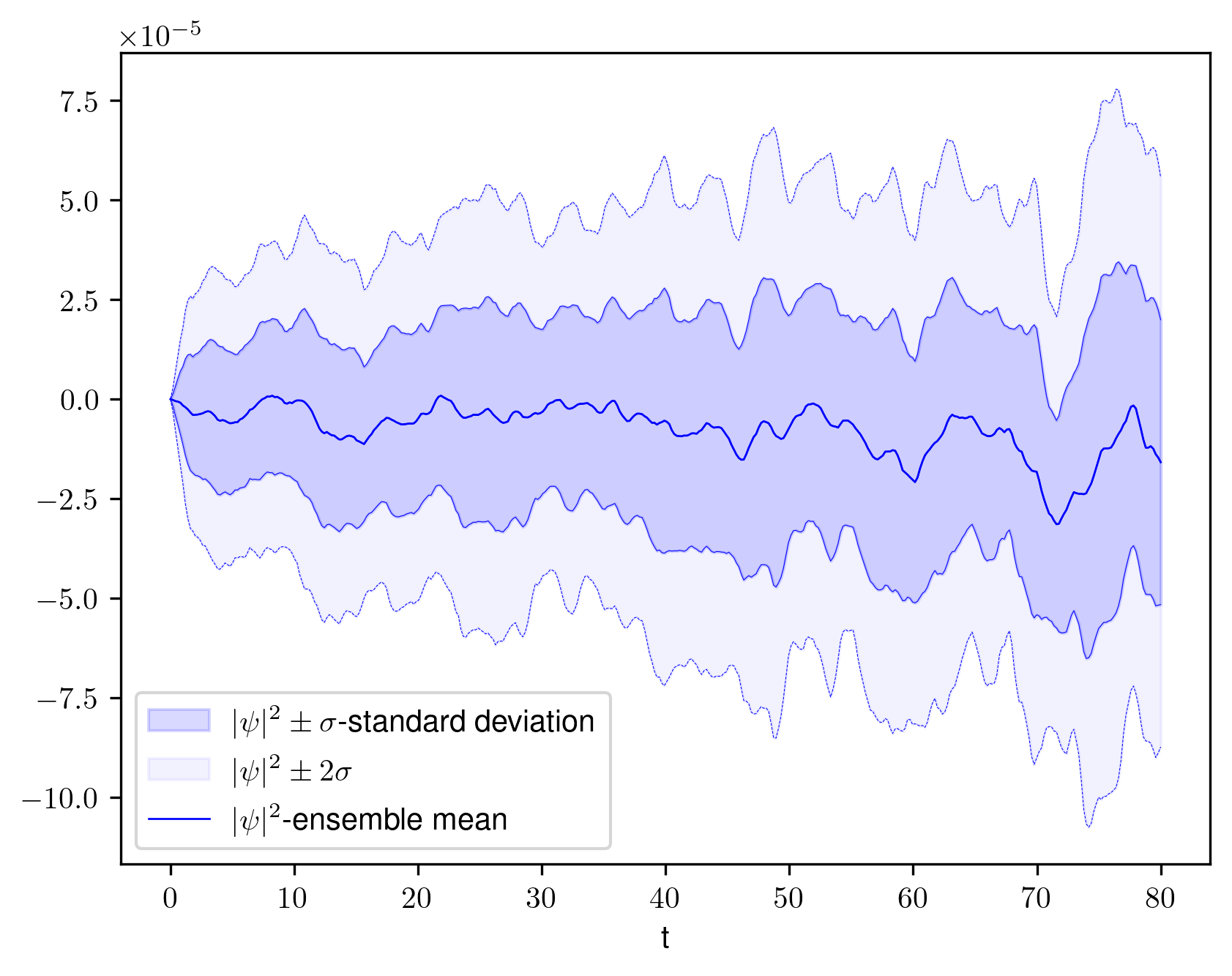}
    \end{subfigure}
    \begin{subfigure}[b]{0.45\textwidth}
        \centering
        \includegraphics[width=\textwidth]{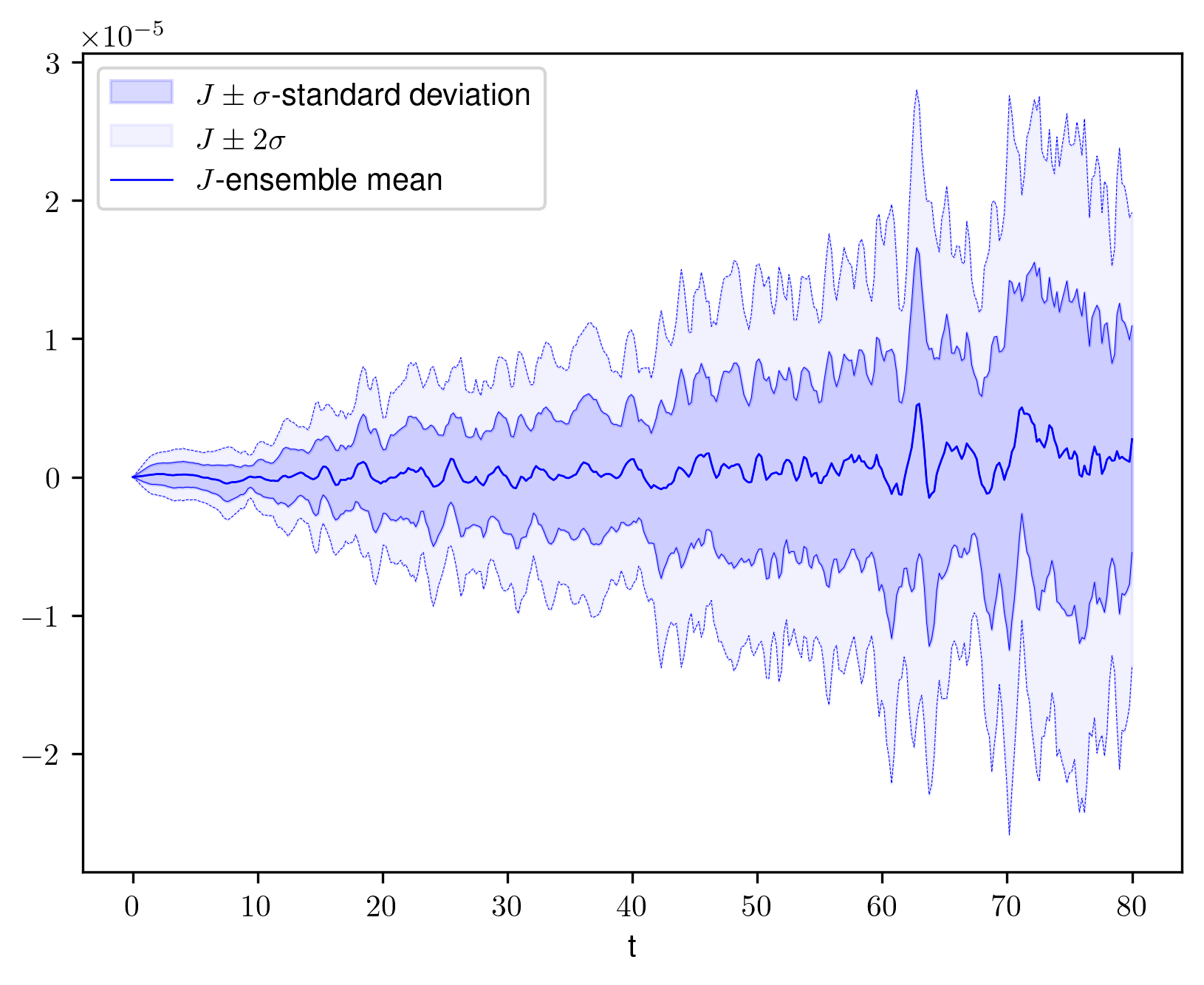}
    \end{subfigure}
    \caption{The ensemble mean and standard deviation of the errors in the global conservation of wave density $|\psi(x,t)|^2$ (left) and the global conservation of wave momentum $J = \psi \nabla \psi^*$ (right) for an ensemble of $32$ members, generated by $32$ independent realisation of Brownian motion in the NLS equation with stochastic dispersion \eqref{eq:stoch NLS}.}
    \label{fig:stoch dispersion 2}
\end{figure}



\section{Conclusion and future work}\label{sec:conclusion}
In this work, we have extended the variational formulation of multisymplectic PDEs to include stochastic perturbations.  Through a stochastic variational principle, we have derived stochastic multisymplectic PDEs and demonstrated their key properties. In Section \ref{subsec:stoch multi}, we established that these stochastic PDEs possess stochastic analogues of fundamental conservation laws, including the $1$-form quasi-conservation law \eqref{eq:stochastic 1-form conservation law}, the $2$-form structural conservation law \eqref{eq:stoch structural conservation laws}, and conservation laws arising from continuous variational symmetries via Noether's theorem \eqref{eq:stoch cls noether}. Examples of these stochastic multisymplectic systems were presented in Sections \ref{sec:SALT} and \ref{subsec:NLS}, where we derived the multisymplectic formulation of stochastic geometric transport based on inverse maps and stochastic NLS equations with stochastic transport and dispersion, respectively.

From a numerical perspective, we extended deterministic multisymplectic collocation methods to the stochastic setting, enabling their application to the stochastic multisymplectic PDEs derived in Section \ref{sec:stochastic multisymplectic cont}. The general class of methods, defined by \eqref{eq:stochastic RK} and \eqref{eq:stochastic RK Ham}, was shown to satisfy the discrete $2$-form conservation law under symplecticity conditions \eqref{eq:stochastic multisymplectic cond 1} and \eqref{eq:stochastic multisymplectic cond 2}, as proven in Theorem \ref{thm: multi symplectic form thm}. Additionally, the discrete momentum conservation law \eqref{eq:discrete momentum conserversion law} was verified for linear systems. Finally, numerical results of the stochastic multisymplectic integrators were presented in Section \ref{subsec:NLS numerics} for the stochastic NLS equations introduced in Section \eqref{subsec:NLS}, demonstrating the practical effectiveness of the proposed methods. 

\paragraph{Open problems and future work.} Following this work, there are several open problems that need to be addressed in the future.
\begin{itemize}
    \item The variational bicomplex (e.g., \cite{Anderson1992,bocharov1999symmetries,Olver1993})   provides the natural framework for studying multisymplectic  PDEs, as it has the clear separation of differential forms between independent and dependent variables \cite{BHL2010}. By using the discrete (finite difference) variational bicomplex, multisymplectic integrators  can be constructed to simulate multisymplectic PDEs through variational integration, see, e.g., \cite{peng2013,PH2023}. The introduction of stochasticity into variational bicomplex is a natural  extension of this work for the study of stochastic multisymplectic PDEs.
    \item Consider energy-preserving stochastic perturbations for multisymplectic systems. As the stochastic variational principle in this work introduces implicit time dependence in the form of white noise, we are interested in exploring stochasticity that preserves energy conservation, similar to that discussed in \cite{HH2021a}.
    \item Extend the driving class of stochasicity to geometric rough paths. Following \cite{CHLN2022}, a rough path driven variational principle can be used to derive rough multisymplectic PDEs. Using appropriate discretisations of the driving rough path, corresponding rough multisymplectic numerical methods can also be developed.
    \item Since many integrable PDE systems admit a multisymplectic structure, it is natural to investigate the types of stochastic perturbations that can be introduced while preserving integrability.For instance, how can the bi-Hamiltonian formalism and Lax pair formalism be extended to stochastic integrable systems?
\end{itemize}

\subsection*{Acknowledgments}
We wish to thank C. Cotter, D. Holm, O. Street and J. Woodfield for several thoughtful suggestions during the course of this work, which have improved or clarified the interpretation of its results.
RH is grateful for the support by the Office of Naval Research (ONR) grant award N00014-22-1-2082, Stochastic Parameterization of Ocean Turbulence for Observational Networks. LP is partially supported by JSPS KAKENHI (24K06852), JST CREST (JPMJCR1914, JPMJCR24Q5), and Keio University (Academic Development Fund, Fukuzawa Fund). RH thanks Keio University for their hospitality, where a large portion of the manuscript was prepared.

\bibliographystyle{abbrv}
\bibliography{main.bib}

\end{document}